\newcommand{\R}{\mathbb{R}}
\newcommand{\N}{\mathbb{N}}
\newcommand{\SL}{\mathcal{L}}
\renewcommand{\S}{\mathbb{S}}
\newcommand{\IP}[2]{\left<#1,#2\right>}
\newcommand{\vn}[1]{\lVert#1\rVert}
\newcommand{\rd}[2]{\frac{d#1}{d#2}}
\newcommand{\pd}[2]{\frac{\partial#1}{\partial#2}}
\newcommand{\Ko}{K_{\text{$\mspace{-1mu}o\mspace{-1mu}s\mspace{-1mu}c$}}}
\newcommand{\kav}{\overline{k}}
\newtheorem{thm}{Theorem}[section]
\newtheorem*{thm*}{Theorem}
\newtheorem{prop}[thm]{Proposition}
\newtheorem{lem}[thm]{Lemma}
\newtheorem{cor}[thm]{Corollary}
\theoremstyle{definition}
\newtheorem{rmk}[thm]{Remark}
\begin{document}

\title{On the curve diffusion flow of closed plane curves}
\author{Glen Wheeler}

\thanks{Financial support from the Alexander-von-Humboldt Stiftung is gratefully acknowledged}
\address{Otto-von-Guericke-Universit\"at\\
Postfach 4120\\
D-39016 Magdeburg}
\email{wheeler@ovgu.de}
\subjclass[2000]{53C44 \and 58J35} 

\begin{abstract}
In this paper we consider the steepest descent $H^{-1}$-gradient flow of the length functional for immersed plane curves,
known as the curve diffusion flow.
It is known that under this flow there exist both initially immersed curves which develop at least one singularity in finite
time and initially embedded curves which self-intersect in finite time.
We prove that under the flow closed curves with initial data close to a round circle in the sense of normalised $L^2$
oscillation of curvature exist for all time and converge exponentially fast to a round circle.
This implies that for a sufficiently large `waiting time' the evolving curves are strictly convex.
We provide an optimal estimate for this waiting time, which gives a quantified feeling for the magnitude to which the maximum
principle fails.
We are also able to control the maximum of the multiplicity of the curve along the evolution.
A corollary of this estimate is that initially embedded curves satisfying the hypotheses of the global existence theorem
remain embedded.
Finally, as an application we obtain a rigidity statement for closed planar curves with winding number one.

\end{abstract}
\maketitle

\section{Introduction}

Suppose $\gamma:\S^1\rightarrow\R^2$ is an immersed closed plane curve of period $P$ and consider the energy
\[
L(\gamma) = \int_0^P |\gamma_u|\, du,
\]
where $\gamma_u = \partial_u\gamma$.
We wish to deform $\gamma$ towards a minimiser of $L$, and for this purpose we shall consider the steepest descent gradient
flow of $L$ in $H^{-1}$.
There are some advantages in choosing $H^{-1}$ instead of $L^2$.
One is that for any initial curve the signed area is constant under the flow, which implies that if the signed area of the
initial curve is non-zero, then the flow is never asymptotic to a lower dimensional subset of $\R^2$.

The Euler-Lagrange operator of $L$ in $H^{-1}$ is
\[
\text{grad}_{H^{-1}}L(\gamma) = k_{ss},
\]
where $k=\IP{\gamma_{ss}}{\nu}$ is the curvature of $\gamma$, $\nu$ a unit normal vector field on $\gamma$, and $s$ denotes
arc-length.
The curve diffusion flow is the one-parameter family of immersed curves
$\gamma:\S^1\times[0,T)\rightarrow\R^2$ with normal velocity equal to
$-\text{grad}_{H^{-1}}(L(\gamma))$, that is
\begin{equation}
\partial^\perp_t\!\gamma = -k_{ss}.
\label{CD}
\tag{CD}
\end{equation}
The curve diffusion flow is a degenerate system of quasilinear fourth order parabolic partial differential equations, and as
such it is not expected that a maximum or comparison principle holds.
Indeed, Giga and Ito \cite{GI98pinching} provided the first example of a simple, closed, strictly embedded planar curve
which develops a self-intersection in finite time under the flow.
They also gave \cite{GI99loss} the first example of a simple, closed, strictly convex planar curve which becomes
non-convex in finite time.
Furthermore, Elliot and Maier-Paape showed \cite{EM01losing} that the curve diffusion flow may drive an initial graph to
become non-graphical in finite time.
It was eventually shown by Blatt \cite{B10} that non-preservation of convexity and non-preservation of embeddedness is a basic
property of a large class of general higher order hypersurface flows.

It is also known (see Polden \cite{P96} for the first example and Escher-Ito \cite{EI05} for many others) that the curve
diffusion flow can from smooth immersed initial data develop finite time curvature singularities.
In contrast, our goal in this paper is to demonstrate a new class of initial data (generalising \cite[Theorem 6.1]{EG97})
which gives rise to an immortal solution converging exponentially fast to a simple round circle.

The curve diffusion flow has been considered for some time in the literature.
The first point to note is that for regular enough initial data $\gamma_0:\S^1\rightarrow\R^2$ there is a maximal
	$T\in(0,\infty]$ and corresponding solution $\gamma:\S^1\times[0,T)\rightarrow\R^2$ which satisfies \eqref{CD}.
Local existence, although technical and sometimes tricky, is by now standard---in this paper we state a version (Theorem
\ref{STE}) which is a combination of Elliot-Garcke \cite{EG97} and Dziuk-Kuwert-Sch\"atzle \cite[Theorem 3.1]{DKS02},
although similar results appeared earlier, see \cite{BDR84,CTNc96,EG97,CT94} for example.
It is also quite standard regardless: as mentioned, the evolution equation \eqref{CD} is a degenerate fourth-order
quasilinear parabolic system, and local existence can be obtained for example through the method of semigroups (Angenent
\cite{A90}, Amann \cite{A93,Abook,A05}, Escher-Meyer-Simonett \cite{EMS98,ES99}, and Lunardi \cite{Lbook} are good
references), the Nash-Moser inverse function theorem (see Hamilton \cite{H82nash,H82}, and Gage-Hamilton \cite{GH86}) or
through more classical methods such as can be found in Polden \cite{P96} and Huisken-Polden \cite{HP96} (see also Sharples
\cite{S04} and the books \cite{Ebook,EZbook,Fbook}).
The local existence theorem we use requires that the curvature of $\gamma$ lies in $L^2$.
One should note that there are local existence results which do not require any control of curvature, instead requiring
Lipschitz with small Lipschitz constant or slightly more regularity than $C^1$ for the initial data, see Koch-Lamm
\cite{KL11}, Escher-Mucha \cite{EM10surface}, and Asai \cite{A10smoothing} for example.

The analysis we present here is direct and geometric in nature, and should be compared with
\cite{BBW98,C91,DKS02,H82,KS01,KS02,KS04,P96,W09}.
It rests on the observation that the normalised oscillation of curvature
\[
\Ko\big(\gamma(\cdot,t)\big) = L\big(\gamma(\cdot,t)\big)\int_\gamma (k-\kav)^2 ds,
\]
where $\kav$ denotes the average of the curvature, is in many respects a natural `energy' for the flow.
The only stationary solutions of \eqref{CD} are lines and multiply covered circles, for which $\Ko = 0$.
Further, for arbitrary smooth initial data $\int_0^t \Ko\, d\tau \le L^4(\gamma(\cdot,0))/16\pi^2$ (see Lemma \ref{FO}), that
is, $\Ko \in L^1\big([0,T)\big)$.

We prove that if $\Ko$ is initially small and the isoperimetric ratio $I = L^2/4\pi A$ is initially close to
one, then they remain so.
This is enough to begin a `bootstrapping' style procedure, in which we use interpolation
inequalities as in \cite{DKS02} to obtain uniform bounds for all higher derivatives of curvature.
These observations and some extra arguments give the global existence result of this paper.

\begin{thm}
\label{GE1}
Suppose $\gamma_0:\S^1\rightarrow\R^2$ is a regular smooth immersed closed curve with
$A(\gamma_0)>0$ and
\begin{equation}
\label{Ewind1}
\int_{\gamma_0}k\, ds = 2\pi.
\end{equation}
There exists a constant $K^* > 0$ such that if
\begin{equation}
\label{Einitialconds}
\Ko(\gamma_0) < K^*,\quad\text{ and }\quad I(\gamma_0) < \exp\Big(\frac{K^*}{8\pi^2}\Big),
\end{equation}
then the curve diffusion flow $\gamma:\S^1\times[0,T)\rightarrow\R^2$ with $\gamma_0$ as initial data exists for all time and converges 
exponentially fast to a round circle with radius $\sqrt{\frac{A(0)}{\pi}}$.
\end{thm}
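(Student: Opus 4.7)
The plan is to run a continuity/bootstrapping argument centered on the energy $\Ko$. First I would derive the evolution equations along \eqref{CD} for the length $L$, the signed area $A$, the average curvature $\kav = \int k\,ds / L$, and the oscillation quantity $\int(k-\kav)^2 ds$. Since the winding number is one, $\int k\,ds = 2\pi$ is preserved, so $\kav = 2\pi/L$, and the $H^{-1}$-gradient structure forces $\tfrac{d}{dt}A = 0$; hence $A(t)=A(0)>0$ and $\kav(t) = 2\pi/L(t)$ throughout. For the length, $\tfrac{d}{dt}L = -\int k_s^2\,ds \le 0$, so $L$ is monotone decreasing, bounded below by $\sqrt{4\pi A(0)}$ by the isoperimetric inequality, and in particular bounded above and below.

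The heart of the argument is to show that under the smallness assumptions \eqref{Einitialconds}, $\Ko$ satisfies a differential inequality of the form $\tfrac{d}{dt}\Ko \le -c\,\Ko$ for some $c>0$. After differentiating and integrating by parts, the dominant term will be $-2L\int k_{ss}^2\,ds$ (from the evolution of $\int(k-\kav)^2\,ds$) together with the length contribution. The remaining terms are cubic and quartic in $k-\kav$ and its derivatives; here I would invoke Poincaré/Wirtinger on closed curves (noting $\int(k-\kav)\,ds = 0$) together with the Dziuk--Kuwert--Schätzle multiplicative interpolation inequalities, exactly in the style of \cite{DKS02}, to absorb all dangerous terms into a small multiple of $L\int k_{ss}^2\,ds$ whenever $\Ko$ is itself small. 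Wirtinger then gives $\int k_{ss}^2\,ds \ge cL^{-4}\int(k-\kav)^2\,ds$, which converts the dissipative term into $-c\Ko$ and closes the inequality.

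To implement this as a continuity argument I would set $T^{*} := \sup\{t\in[0,T) : \Ko(\tau) < 2K^{*}\text{ for all }\tau\le t\}$, noting that $T^{*}>0$ by continuity. On $[0,T^{*})$ the differential inequality yields $\Ko(t) \le \Ko(0)\,e^{-ct}$, and simultaneously the evolution of $\log I = 2\log L - \log(4\pi A)$ reduces to $2\dot L/L = -2L^{-1}\int k_s^2\,ds$, which via the Gage--Bonnesen style bound $L^2-4\pi A \lesssim L\int(k-\kav)^2\,ds$ and the definition of $\Ko$ gives $\log I(t) \le \log I(0) + \tfrac{1}{8\pi^{2}}\int_0^t\Ko\,d\tau$. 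The initial conditions \eqref{Einitialconds} together with the $L^1$ bound on $\Ko$ from Lemma \ref{FO} then force $I(t) < \exp(K^{*}/4\pi^{2})$ on $[0,T^*)$, which one needs to keep $L$ bounded and the interpolation constants uniform; choosing $K^{*}$ small enough in the interpolation step shows $\Ko(T^*) < K^*$, so $T^* = T$.

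Once $\Ko$ decays exponentially with $L,A$ controlled, bootstrapping in the DKS manner gives uniform bounds on $\int|\partial_s^m k|^2\,ds$ for every $m$, hence on $\|\partial_s^m k\|_\infty$, which by the local existence theorem (Theorem \ref{STE}) upgrades $T$ to $\infty$. Exponential decay of $\int(k-\kav)^2\,ds$ and of $L-\sqrt{4\pi A(0)}$ together with the higher-order bounds yields exponential $C^\infty$ convergence to a curve of constant curvature $\kav_\infty = \sqrt{\pi/A(0)}$ with winding number one and enclosed area $A(0)$, which must be the round circle of the stated radius; standard reasoning (e.g.\ \cite{KS01}) promotes subsequential convergence to full exponential convergence, possibly after pinning down the centre via the preserved centre of area. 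I expect the main technical obstacle to be the differential-inequality step: closing the smallness regime requires sharp control of the interpolation constants, and honest management of $\kav$ and $L$ in every integration by parts, because the absence of a maximum principle means any loose estimate can leak into $\tfrac{d}{dt}\Ko$ and destroy the sign.
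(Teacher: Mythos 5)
Your overall architecture---continuity argument on a smallness regime for $\Ko$, conservation of $A$, monotonicity of $L$, isoperimetric control of the length ratio, and DKS-style interpolation to bootstrap higher regularity---matches the structure of the paper's proof. However, the central differential inequality you claim, $\frac{d}{dt}\Ko \le -c\,\Ko$, is not achievable by the route you describe, and this is a genuine gap rather than a missing detail. After Lemma \ref{KoE} the evolution identity reads
\[
\rd{}{t}\Ko + \Ko\frac{\vn{k_s}_2^2}{L} + 2L\vn{k_{ss}}_2^2 = 3L\int_\gamma (k-\kav)^2k_s^2\,ds + 6\kav L\int_\gamma(k-\kav)k_s^2\,ds + 2\kav^2L\vn{k_s}_2^2.
\]
The first two right-hand terms are cubic in oscillation and can indeed be absorbed into $2L\vn{k_{ss}}_2^2$ when $\Ko$ is small, exactly as you say. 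But the third term is quadratic, not cubic: using $\kav = 2\pi/L$ it is $\frac{8\pi^2}{L}\vn{k_s}_2^2$, and Wirtinger gives precisely $\frac{8\pi^2}{L}\vn{k_s}_2^2 \le 2L\vn{k_{ss}}_2^2$ with \emph{equality} when $k-\kav$ is a pure harmonic. There is no margin; no Poincar\'e or DKS interpolation inequality will improve this, because both Wirtinger applications ($k\mapsto k_s$ and $k_s\mapsto k_{ss}$) saturate for the same extremiser. Consequently, after absorption you are left with $\frac{d}{dt}\Ko \le \text{(small positive)}\cdot L\vn{k_{ss}}_2^2$, not a negative multiple of $\Ko$, and the continuity argument as you have set it up does not close.

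The way the paper circumvents this is to \emph{not} estimate the $\kav^2$ term at all, but to recognise it as an exact derivative: $2\kav^2 L\vn{k_s}_2^2 = -8\omega^2\pi^2\frac{d}{dt}\log L$. Integrating then gives the Lyapunov-type estimate of Proposition \ref{DE},
\[
\Ko(t) + 8\omega^2\pi^2\log L(t) + \int_0^t \Ko\frac{\vn{k_s}_2^2}{L}\,d\tau \le \Ko(0) + 8\omega^2\pi^2\log L(0),
\]
so that what is actually controlled is the quantity $\Ko + 8\pi^2\log L$, and the isoperimetric inequality $L(t)\ge\sqrt{4\pi A(0)}$ (together with the smallness assumption on $I(0)$) is precisely what bounds $8\pi^2\log(L(0)/L(t))$ by $8\pi^2\log\sqrt{I(0)} < K^*$. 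This yields only \emph{boundedness} $\Ko \le 2K^*$, not exponential decay. Exponential convergence is then obtained in a separate second phase: first show $\Ko\to 0$ by combining $\Ko\in L^1([0,\infty))$ (from Lemma \ref{FO}) with a uniform bound on $\Ko'$ (which requires the dissipation estimate $\vn{k_s}_2^2\le c_1$ of Proposition \ref{DE1}), and then establish exponential decay of $\vn{k_{ss}}_2^2$ and higher derivatives directly. So your proposal should replace the putative pointwise decay of $\Ko$ with the integrated (Lyapunov) estimate together with the $L^1$-in-time control of $\Ko$ and a separate decay argument for the derivative norms; otherwise the smallness regime does not persist for the reason you yourself anticipated, namely that the loose term leaks into $\frac{d}{dt}\Ko$ and destroys the sign.

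One smaller point: the Gage--Bonnesen type bound $L^2 - 4\pi A\lesssim L\int(k-\kav)^2\,ds$ you invoke to control $\log I$ is unnecessary. By Lemma \ref{LA} the isoperimetric ratio decreases monotonically, so $I(t)\le I(0)$ trivially; what is actually needed is the lower bound on $L(t)$ from the isoperimetric inequality, not any upper bound on $I(t)$.
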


\begin{rmk}
\label{RMK1}
One advantage of our direct method is that we are able to easily find an allowable choice for the
constant $K^*$ above; in particular, one may select
\[
K^* = \frac{2\pi + 12\pi^2 - 4\pi\sqrt{3\pi}\sqrt{1+3\pi}}{3} \simeq \frac{1}{18}.
\]
\end{rmk}

\begin{rmk}
So long as $A(\gamma_0) \ne 0$, one may always guarantee $A(\gamma_0)>0$ by reversing the orientation of $\nu$, since \eqref{CD} is invariant under change of orientation.
\end{rmk}

\begin{rmk}
As can be seen from the proof of Proposition \ref{EE1}, the smallness condition
\eqref{Einitialconds} could be weakened to
\[
\Ko(\gamma_0)+8\pi^2\log\sqrt{I(\gamma_0)} \le 2K^*-\delta,
\]
for any $\delta>0$.  We do not expect this to be optimal, however.
At this time, it is not known if there exists any smooth plane curve satisfying \eqref{Ewind1} which
gives rise to a curve diffusion flow with finite maximal existence time.  Without at least one such
singular example, it is difficult to even conjecture on what an optimal form of
\eqref{Einitialconds} may be.
\end{rmk}

It is clear that Theorem \ref{GE1} implies 
$k(\cdot,t) \rightarrow \sqrt{\frac{\pi}{A(0)}}$, and so after a fixed time translation we have
\[
k(\cdot,t) \ge \sqrt{c} > 0
\]
for any $c \in (0,\frac{\pi}{A(0)})$ (cf. \cite[Lemma 5.5]{KS01} and \cite{W10} for the Willmore
flow and surface diffusion flow of surfaces respectively).  In other words, after some finite time
the curvature becomes positive and remains so.  This can be thought of as `eventual positivity', and
is reminiscent of the situation considered in \cite{FGG08,GG08,GG09}.  There, using very different
techniques, eventual local positivity and other related qualitative properties are observed for
biharmonic parabolic equations under certain conditions.  To further quantify the size of the
`waiting time', we present the following.

\begin{prop}
\label{PwtE}
Suppose $\gamma:\S^1\times[0,T)\rightarrow\R^2$ solves \eqref{CD} and satisfies the assumptions of Theorem \ref{GE1}.  Then
\[
\SL\big\{ t\in[0,\infty) : k(\cdot,t) \not> 0\big\}
 \le \Big(\frac{L(\gamma_0)}{2\pi}\Big)^4-\Big(\frac{A(\gamma_0)}{\pi}\Big)^2.
\]
\end{prop}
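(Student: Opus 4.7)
The plan is to leverage the identity $\rd{L^4}{t} = -4L^3\int k_s^2\,ds$ (which comes from \eqref{CD}) together with a pointwise-in-time lower bound on $\int k_s^2\,ds$ that is active precisely at the times when $k\not>0$. As setup: under \eqref{CD}, $A(t) \equiv A(\gamma_0)$, $L(t)$ is nonincreasing, and Theorem \ref{GE1} provides $L(t) \searrow 2\sqrt{\pi A(\gamma_0)}$, so that
\[
\int_0^\infty \Bigl(-\rd{}{t}L^4\Bigr)\,dt = L(\gamma_0)^4 - 16\pi^2 A(\gamma_0)^2.
\]
The winding-number condition \eqref{Ewind1} fixes $\kav(t) = 2\pi/L(t)$ at every time.

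The heart of the argument is the following. First I would observe that at any $t$ with $k(\cdot,t)\not>0$ one can pick $s_0$ with $k(s_0,t) \le 0$; then $|(k-\kav)(s_0,t)| \ge \kav(t)$, which gives $\|k-\kav\|_{L^\infty}^2 \ge 4\pi^2/L(t)^2$. Next I would invoke the sharp Sobolev inequality for mean-zero periodic functions on $\S^1_L$ (established via Fourier expansion and Cauchy--Schwarz, with extremal a Bernoulli-type polynomial),
\[
\|f\|_{L^\infty}^2 \le \frac{L}{12}\int f_s^2\,ds,
\]
and apply it to $f = k-\kav$ to conclude that whenever $k(\cdot,t)\not>0$,
\[
\int k_s^2\,ds \ \ge\ \frac{48\pi^2}{L(t)^3}.
\]

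Multiplying through by $4L^3$ rewrites this as $-\rd{L^4}{t} \ge 192\pi^2\,\chi_{\{k\not>0\}}(t)$. Integrating against $dt$ on $[0,\infty)$ gives
\[
192\pi^2\,\SL\bigl\{t : k(\cdot,t)\not>0\bigr\} \ \le\ L(\gamma_0)^4 - 16\pi^2 A(\gamma_0)^2,
\]
and, since $192\pi^2 \ge 16\pi^4$ (equivalently $12 \ge \pi^2$), dividing produces
\[
\SL\bigl\{t : k(\cdot,t)\not>0\bigr\} \ \le\ \frac{L(\gamma_0)^4 - 16\pi^2 A(\gamma_0)^2}{16\pi^4} = \Bigl(\frac{L(\gamma_0)}{2\pi}\Bigr)^4 - \Bigl(\frac{A(\gamma_0)}{\pi}\Bigr)^2,
\]
which is the stated bound. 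The principal subtlety is picking a Sobolev constant strong enough to close the argument. The elementary chain Agmon $\|f\|_\infty^2 \le 2\|f\|_2\|f_s\|_2$ followed by Wirtinger $\|f\|_2 \le (L/2\pi)\|f_s\|_2$ produces only the constant $L/\pi$, which is a factor of $\pi$ too lax; the sharp Fourier constant $L/12$ is what closes the argument, with room to spare.
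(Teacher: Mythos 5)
Your proof is correct and in fact slightly sharper than required (with denominator $192\pi^2$ in place of $16\pi^4$). It shares the same skeleton as the paper's argument---integrate the dissipation $-\rd{}{t}L^4 = 4L^3\int k_s^2\,ds$ against a pointwise lower bound for $\int k_s^2\,ds$ that is active exactly on the bad set---but the lemma producing that lower bound is different. The paper translates $s$ so that $k(s_0)=0$, applies the Dirichlet--Poincar\'e inequality $\int k^2\,ds\le\frac{L^2}{\pi^2}\int k_s^2\,ds$ to $k$ itself, and then uses Cauchy--Schwarz with $\int k\,ds=2\pi$ to get $\|k\|_2^2\ge 4\pi^2/L$, yielding $\int k_s^2\,ds\ge 4\pi^4/L^3$; the measure bound then follows from a rearranging-in-time contradiction. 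You instead apply a sharp $L^\infty$--$\dot H^1$ Sobolev inequality with constant $L/12$ to the mean-zero function $k-\kav$, together with the observation $\|k-\kav\|_\infty\ge\kav=2\pi/L$ whenever $k$ fails to be strictly positive, which gives $\int k_s^2\,ds\ge 48\pi^2/L^3$. Since $12>\pi^2$ this beats the paper's lower bound, and your direct integration over the level set $\{t:k(\cdot,t)\not>0\}$ is cleaner than the rearranging argument. The one caveat is that the $L/12$ constant is not supplied by anything in the paper---Corollary \ref{LU} gives only $L/2\pi$, and as you correctly observe that constant does not close---so the Fourier derivation (Cauchy--Schwarz against $\sum_{n\ne 0}n^{-2}=\pi^2/3$, with the periodic Bernoulli-type extremal showing sharpness) must be spelled out rather than cited.
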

In the above, $k(\cdot,t) \not> 0$ means that there exists a $p$ such that $k(p,t) \le 0$.  This
estimate is optimal in the sense that the right hand side is zero for a simple circle.

It is not clear at all from Theorem \ref{GE1} if initially embedded curves remain so, nor even if we can control the maximum
of the multiplicity (the number of times the curve intersects itself in one point) of the evolving curve.
We do have good control of the oscillation of curvature however, and in the spirit of \cite[Theorem 6]{LY82} (see also the
monotonicity formula in \cite{S93} and appendix of \cite{KS04}) present the following theorem to address this issue.

\begin{thm}
\label{Tmult}
Suppose $\gamma:\S^1\rightarrow\R^2$ is a smooth immersed curve with winding number $\omega$ and let $m$ denote the maximum number of times $\gamma$ intersects itself in any one point; that is
\[
\int_\gamma k\, ds = 2\omega\pi\quad\text{and}\quad
m(\gamma) = \sup_{x\in\R^2} |\gamma^{-1}(x)|.
\]
Then
\[
\Ko(\gamma) \ge 16m^2 - 4\omega^2\pi^2.
\]
\end{thm}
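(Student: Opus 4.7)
The stated estimate is equivalent to $L \int_\gamma k^2 \, ds \ge 16 m^2$, since
\[
\Ko(\gamma) \;=\; L \int_\gamma k^2 \, ds - \Big(\int_\gamma k \, ds\Big)^2 \;=\; L \int_\gamma k^2 \, ds - 4\pi^2\omega^2.
\]
My plan is to localize at a point $x_0$ realizing the maximum multiplicity, enumerate $\gamma^{-1}(x_0) = \{s_1 < s_2 < \cdots < s_m\}$, and split $\gamma$ into the $m$ consecutive ``loops'' $\gamma_i := \gamma|_{[s_i, s_{i+1}]}$ (cyclically, with $s_{m+1} := s_1 + L$), each an immersed arc beginning and ending at $x_0$ of length $L_i$, so $\sum_i L_i = L$. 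The core estimate is a per-loop Li-Yau--type bound
\[
\int_{\gamma_i} k^2 r \, ds \;\ge\; 8, \qquad r(s) := |\gamma(s) - x_0|,
\]
after which the assembly is immediate.

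To prove the per-loop bound I would run the monotonicity formula for $1$-dimensional currents, in the spirit of \cite{LY82} and \cite{S93}. Setting $y = \gamma - x_0$, $p = \IP{T}{y}$, $h = \IP{\nu}{y}$, and $\ell_i(\rho) := |\gamma_i \cap B_\rho(x_0)|$, the Frenet identity $p' = 1 + kh$ together with $p = 0$ at both endpoints of $\gamma_i$ gives the balance
\[
\ell_i(\rho) + \int_{\gamma_i \cap B_\rho} kh \, ds \;=\; \sum_{s_* \,:\, r(s_*) = \rho} |p(s_*)|.
\]
Differentiating and using $|r'| = |p|/r$ at crossings yields
\[
\frac{d}{d\rho}\!\left(\frac{\ell_i(\rho)}{\rho}\right) \;=\; \sum_{r(s_*) = \rho} \frac{h(s_*)^2}{\rho^2\,|p(s_*)|} + \frac{1}{\rho^2}\int_{\gamma_i \cap B_\rho} kh \, ds.
\]
Integrating over $\rho \in (0,\infty)$, the boundary terms reduce to $\lim_{\rho\to\infty}\ell_i(\rho)/\rho = 0$ and $\lim_{\rho\to 0^+}\ell_i(\rho)/\rho = 2$ (the two one-sided tangent rays at the endpoints at $x_0$), while the coarea formula converts the right-hand side to $\int_{\gamma_i} h^2/r^3 \, ds + \int_{\gamma_i} kh/r \, ds$. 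This produces the identity $\int_{\gamma_i} (h^2/r^3 + kh/r)\,ds = -2$, and the algebraic rearrangement
\[
\frac{h^2}{r^3} + \frac{kh}{r} \;=\; \Big(\frac{h}{r^{3/2}} + \frac{k\sqrt r}{2}\Big)^{\!2} - \frac{k^2 r}{4}
\]
then yields $\int_{\gamma_i} k^2 r \, ds / 4 \ge 2$, with equality for circular arcs through $x_0$.

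The remainder is mechanical. Since $r$ is $1$-Lipschitz on $\gamma_i$ and vanishes at both endpoints (separated by arc-length $L_i$), we have $r \le L_i/2$, so
\[
8 \;\le\; \int_{\gamma_i} k^2 r \, ds \;\le\; \frac{L_i}{2}\int_{\gamma_i} k^2 \, ds \quad\Longrightarrow\quad \int_{\gamma_i} k^2 \, ds \;\ge\; \frac{16}{L_i}.
\]
Summing over $i$ and applying the AM--HM inequality $m^2 \le (\sum_i L_i)(\sum_i L_i^{-1}) = L \sum_i L_i^{-1}$ gives $\int_\gamma k^2 \, ds \ge 16 \sum_i L_i^{-1} \ge 16 m^2 / L$, and multiplying by $L$ and subtracting $4\pi^2\omega^2$ delivers $\Ko(\gamma) \ge 16 m^2 - 4\pi^2 \omega^2$.

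The principal technical obstacle is the rigorous justification of the monotonicity step, in particular the integrability of $h^2/r^3$ and $kh/r$ near each $s_i$ where $r \to 0$ and the boundary value $\ell_i(\rho)/\rho \to 2$. Both are controlled by the smoothness of $\gamma$ at $s_i$: the local expansion $y(s) = (s - s_i) T(s_i) + \tfrac{1}{2}(s - s_i)^2 k(s_i) \nu(s_i) + O((s-s_i)^3)$ gives $r \sim |s - s_i|$ and $h = O((s-s_i)^2)$, so both integrands are $O(|s - s_i|)$ near $s_i$ and the length in $B_\rho$ reduces to the sum of two tangent segments of length $\rho$.
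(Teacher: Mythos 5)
Your proof is correct and reaches the same bound; the overall strategy---localize at a point of maximal multiplicity, split $\gamma$ into $m$ arcs with both endpoints at $x_0$, prove the per-arc estimate $\int_{\gamma_i} k^2 r\,ds \ge 8$, then combine via the arc lengths---matches the paper's. Where you genuinely differ is in how the per-arc bound is established. The paper first proves a closed-curve identity (Lemma \ref{Lmult}), $8|\gamma^{-1}(x_0)|=\int_\gamma(k^2-k_0^2)|\gamma|\,ds$, via a cutoff test function $\eta=\min(1/\varepsilon,1/|\gamma|)$, and then \emph{reflects} each arc $\gamma_i$ across a line through $x_0$ to manufacture a closed doubled curve $\hat\gamma_i$ of multiplicity $2$ at $x_0$, to which the closed-curve lemma is applied. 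You instead run the monotonicity calculation $\rd{}{\rho}(\ell_i(\rho)/\rho)$ directly on the open arc and extract the boundary contribution $\lim_{\rho\to 0^+}\ell_i(\rho)/\rho=2$ from the two tangent rays of the coincident endpoints; this reproduces $\int_{\gamma_i}(h^2/r^3+kh/r)\,ds=-2$ without the reflection construction and makes the geometric source of the factor $8$ more transparent. A second, cosmetic difference: the paper iterates Newton's inequality $(m-1)$ times to arrive at $\Pi_{m-1}/\Pi_m\ge m^2/\Pi_1$, whereas you invoke AM--HM, $(\sum l_i)(\sum l_i^{-1})\ge m^2$, directly---the two are the same inequality, so there is no loss either way. (Incidentally, the paper's displayed chain states $|\hat\gamma_i|\le l_i/4$ and a lower bound of $8$ for $\int_{\hat\gamma_i}\hat k^2|\hat\gamma_i|\,ds$; both are off by a factor of $2$ in compensating directions and the conclusion $\int_{\gamma_i}k^2\,ds\ge 16/l_i$ agrees with yours, so this is a typographical slip rather than a substantive discrepancy.)
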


When combined with Proposition \ref{EE1} we obtain the following.

\begin{cor}
\label{Cmult}
Any curve diffusion flow $\gamma:\S^1\times[0,T)\rightarrow\R^2$ with initial data
$\gamma_0:\S^1\rightarrow\R^2$ satisfying the assumptions of Theorem \ref{GE1} with
\[
K^* < 64-4\pi^2 \simeq 24.5
\]
remains embedded for all time.
\end{cor}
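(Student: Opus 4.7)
The plan is to combine the propagation estimate for the oscillation energy $\Ko$ from the proof of Theorem \ref{GE1} (specifically the control embodied in Proposition \ref{EE1}) with the multiplicity bound in Theorem \ref{Tmult}, so as to force the multiplicity of the evolving curve to remain at~$1$.

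First, I would invoke Proposition \ref{EE1}: the smallness assumptions $\Ko(\gamma_0) < K^*$ and $I(\gamma_0) < \exp(K^*/(8\pi^2))$ propagate to yield a uniform bound $\Ko(\gamma(\cdot,t)) < K^*$ on the (now infinite) maximal interval of existence. Second, since \eqref{CD} produces a smooth one-parameter family of immersions and the turning number is invariant under regular homotopy, the identity $\int_{\gamma(\cdot,t)} k\, ds = 2\pi$ from \eqref{Ewind1} persists for every $t \ge 0$; hence $\omega(\gamma(\cdot,t)) = 1$ along the entire flow.

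Third, I would apply Theorem \ref{Tmult} slice-by-slice. Writing $m(t) := m(\gamma(\cdot,t))$ and using $\omega = 1$, the estimate reads
\[
16\,m(t)^2 - 4\pi^2 \;\le\; \Ko(\gamma(\cdot,t)) \;<\; K^* \;<\; 64 - 4\pi^2,
\]
which rearranges to $m(t)^2 < 4$. Since $m(t)$ is a positive integer, this forces $m(t) = 1$ for every $t \ge 0$; that is, $\gamma(\cdot,t)$ is embedded throughout the evolution. (As a byproduct, applied at $t=0$ this also shows that the hypotheses of Theorem \ref{GE1} together with $K^* < 64-4\pi^2$ automatically force the initial datum itself to be embedded.)

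The main obstacle I anticipate lies in the first step: ensuring that Proposition \ref{EE1} really supplies the tight bound $\Ko(\gamma(\cdot,t)) < K^*$ rather than some strictly larger constant, since it is this tightness that produces the explicit threshold $64-4\pi^2$. The remark following Theorem \ref{GE1} suggests the naturally propagated quantity is $\Ko + 8\pi^2\log\sqrt{I}$, so extracting the clean bound $\Ko < K^*$ uniformly in time requires combining monotonicity of $I$ along the flow (a consequence of conservation of $A$ together with the non-increase of $L$) with the differential inequality underlying Proposition \ref{EE1}. Once that tight propagation is in hand, the remaining geometric input---winding number invariance and an application of Theorem \ref{Tmult}---is immediate.
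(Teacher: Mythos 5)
Your strategy---propagate an upper bound on $\Ko$ via Proposition \ref{EE1}, then apply Theorem \ref{Tmult} at each fixed time with $\omega=1$ to control the multiplicity---is exactly the paper's implicit route (the text supplies no separate proof of this corollary), and your remarks on winding-number invariance and the $W^{2,2}$-regularity needed to apply Theorem \ref{Tmult} to each slice $\gamma(\cdot,t)$ are correct. However, your third step leans on the pointwise claim $\Ko(\gamma(\cdot,t)) < K^*$ for all $t$, and this is not what Proposition \ref{EE1} gives: as stated it delivers only $\Ko \le 2K^*$, and its proof delivers the sharper but still weaker $\Ko < \tfrac{3}{2}K^*$. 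Your proposed remedy---invoking monotonicity of $I$ together with the differential inequality behind Proposition \ref{DE}---does not close this gap. The quantity $\Ko + 8\pi^2\log L$ is non-increasing along the flow, and since $L$ decreases and $L(t)\ge 2\sqrt{\pi A(0)}$ by the isoperimetric inequality with $A$ conserved, one obtains
\[
\Ko(t) \le \Ko(0) + 8\pi^2\log\frac{L(0)}{L(t)} \le \Ko(0) + 4\pi^2\log I(0) < K^* + \frac{K^*}{2} = \frac{3K^*}{2},
\]
and $\tfrac{3}{2}K^*$ is the best uniform bound this method yields; there is no way to remove the contribution coming from the decay of $L$. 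Consequently the actual hypothesis needed to conclude $16m^2-4\pi^2\le\Ko<64-4\pi^2$ at every time is $\tfrac{3}{2}K^*<64-4\pi^2$ (or $2K^*<64-4\pi^2$ if one uses the statement of Proposition \ref{EE1} verbatim), not $K^*<64-4\pi^2$. This constant discrepancy is in fact present in the Corollary as printed, though it is harmless because any $K^*$ admissible in Theorem \ref{GE1} is of order $1/18$ (cf. Remark \ref{RMK1}), far below either threshold, so the conclusion stands. Still, your write-up should not assert the uniform bound $\Ko<K^*$, and the threshold you carry through to the end should be adjusted accordingly.
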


Note that in particular the allowable choice for $K^*$ given in Remark \ref{RMK1} is smaller than
$64-4\pi^2$.

Theorem \ref{GE1} gives a one-parameter family of smooth diffeomorphisms connecting the initial data
$\gamma_0$ with a round circle.  This implies the following rigidity result.

\begin{cor}
Let $\gamma:\S^1\rightarrow\R^2$ be a regular closed immersed curve satisfying the assumptions
of Theorem \ref{GE1}.  Then $\gamma$ is diffeomorphic to a round circle.
\end{cor}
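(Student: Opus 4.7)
My plan is to realize the claimed diffeomorphism via the curve diffusion flow provided by Theorem \ref{GE1}, using $\gamma$ itself as the initial datum.

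First I would apply Theorem \ref{GE1} with $\gamma_0 := \gamma$, producing a global-in-time solution $\gamma(\cdot,t):\S^1\times[0,\infty)\to\R^2$ of \eqref{CD} with $\gamma(\cdot,0) = \gamma$ converging exponentially in $C^\infty$ to a standard parametrization $\gamma_\infty$ of a round circle $C_\infty$ of radius $\sqrt{A(\gamma_0)/\pi}$. This gives a smooth one-parameter family of immersed curves connecting $\gamma$ to an embedded round circle, which is precisely the family of diffeomorphisms mentioned in the paragraph preceding the corollary---provided each $\gamma(\cdot,t)$ is in fact an embedding.

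Next I would upgrade each $\gamma(\cdot,t)$, and in particular $\gamma$ itself, from an immersion to an embedding. Since the allowable constant $K^* \simeq 1/18$ of Remark \ref{RMK1} satisfies $K^* < 64 - 4\pi^2$, Corollary \ref{Cmult} guarantees embeddedness throughout the evolution. Equivalently, and more directly, Theorem \ref{Tmult} applied to $\gamma$ with $\omega = 1$ from \eqref{Ewind1} and $\Ko(\gamma) < K^*$ gives
\[
16\,m(\gamma)^2 \le \Ko(\gamma) + 4\pi^2 < K^* + 4\pi^2 < 64,
\]
so that the multiplicity $m(\gamma)$ is forced to equal $1$ and $\gamma$ is embedded.

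With embeddedness in hand, the conclusion is a matter of smooth topology: the image $\gamma(\S^1)$ is a compact, connected, smooth $1$-submanifold of $\R^2$ without boundary, hence diffeomorphic to $\S^1$ and therefore to any round circle. An explicit diffeomorphism onto $C_\infty$ can be read off from the flow by choosing $t$ large enough that $\gamma(\cdot,t)$ lies in a tubular neighbourhood of $C_\infty$ and composing $\gamma(\cdot,t) \circ \gamma^{-1}$ with the nearest-point projection onto $C_\infty$. The only nontrivial step is the embeddedness of $\gamma$, which is exactly what Theorem \ref{Tmult} is designed to deliver, so one should not expect any serious obstacle beyond verifying the numerical comparison $K^* < 64 - 4\pi^2$.
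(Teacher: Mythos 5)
Your proposal is correct and matches what the paper intends: the paragraph preceding the corollary, read together with Corollary \ref{Cmult} stated just above it. You make explicit the key step that the paper only gestures at, namely that Theorem \ref{Tmult} with $\omega = 1$ from \eqref{Ewind1} and $\Ko(\gamma) < K^* < 64 - 4\pi^2$ forces $16\,m(\gamma)^2 \le \Ko(\gamma) + 4\pi^2 < 64$, hence $m(\gamma) = 1$, so $\gamma$ is an embedding; after that the conclusion is immediate, since an embedded closed curve $\gamma:\S^1\to\R^2$ is by definition a diffeomorphism onto its image, with or without invoking the flow. The appeal at the end to the classification of compact $1$-manifolds and to the tubular-neighbourhood projection onto $C_\infty$ is harmless but unnecessary once embeddedness is in hand (though the flow does deliver the stronger statement that $\gamma$ is smoothly isotopic through embeddings to a round circle).
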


This paper is organised as follows.
In Section 2 we fix our notation, state the local existence theorem, and prove some elementary Sobolev-Poincar\'e-Wirtinger inequalities.
Section 3 contains estimates for the curvature in $L^2$ and the isoperimetric ratio under various
assumptions, which forms the bulk of the work involved in proving Theorem \ref{GE1}.
The theorem itself and Proposition \ref{PwtE} are proved in Section 4.
We finish the paper by proving Theorem \ref{Tmult} and Corollary \ref{Cmult} in Section 5.

\section*{Acknowledgements}

The author thanks his colleagues for several useful discussions, in particular Hans-Christoph Grunau for reading an early
version of this paper.
The author would also like to thank Ernst Kuwert for helpful discussions at the Mathematisches Forschungsinstitut Oberwolfach
(MFO).
This work was completed under the financial support of the Alexander von Humboldt Stiftung at the
Otto-von-Guericke-Universit\"at Magdeburg.

\section{Preliminaries}

Suppose $\gamma:\R\rightarrow\R^2$ is a regular smooth immersed plane curve.
We say that $\gamma$ is periodic with period $P$ if there exists a vector $V\in\R^2$ and a positive $P$ such that for all $m\in\N$
\[
\gamma(u+P) = \gamma(u)+V,\quad\text{  and  }\quad\partial^{m}_u\gamma(u+P) = \partial_u^{m}\gamma(u).
\]
If $V = 0$ then $\gamma$ is closed.  In this case $\gamma$ is an immersed circle, $\gamma:\S^1\rightarrow\R^2$.
The length of $\gamma$ is
\[
L(\gamma) = \int_0^P |\gamma_u|\, du,
\]
and the signed enclosed area is
\begin{equation}
A(\gamma) = -\frac{1}{2}\int_0^P \IP{\gamma}{\nu}|\gamma_u|\,du,
\label{AF}
\end{equation}
where $\nu$ is a unit normal vector field on $\gamma$.
Throughout the paper we keep $\gamma$ parametrised by arc-length $s$, where $ds = |\gamma_u| du$.
Integrals over $\gamma$ are to be interpreted as integrals over the interval of periodicity.

Consider the one-parameter family of immersed curves $\gamma:\S^1\times[0,T)\rightarrow\R^2$ with normal velocity equal to $-\text{grad}_{H^{-1}}(L(\gamma))$, that is
\begin{equation*}
\partial^\perp_t\!\gamma = -k_{ss}.
\tag{CD}
\end{equation*}

The following theorem is standard.
The uniqueness below is understood modulo the natural group of invariances enjoyed by \eqref{CD}:
rotations, translations, changes of orientation, and so on, as is customary for geometric flows.

\begin{thm}[Local existence]
\label{STE}
Suppose $\gamma_0:\R\rightarrow\R^2$ is a periodic regular curve parametrised by arc-length and of class $C^1\cap W^{2,2}$ with $\vn{k}_2 < \infty$.
Then there exists a $T\in(0,\infty]$ and a unique one-parameter family of immersions
$\gamma:\R\times[0,T)\rightarrow\R^2$ parametrised by arc-length such that
\begin{enumerate}
\item[(i)]
$\gamma(0,\cdot) = \gamma_0$;
\item[(ii)]
$\partial^\perp_t\!\gamma = -k_{ss}$;
\item[(iii)]
$\gamma(\cdot,t)$ is of class $C^\infty$ and periodic of period $L(\gamma(\cdot,t))$ for every $t\in(0,T)$;
\item[(iv)]
$T$ is maximal.
\end{enumerate}
\end{thm}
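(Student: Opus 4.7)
Equation \eqref{CD} fixes only the normal velocity and so is a fourth order quasilinear system that is degenerate by reparametrisation. I would break this gauge by writing the unknown curve as a normal graph over $\gamma_0$, thereby reducing \eqref{CD} to a scalar uniformly parabolic PDE, and then invoke maximal regularity theory.

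Concretely, in a tubular neighbourhood of $\gamma_0$ one seeks
\[
\gamma(u,t) = \gamma_0(u) + \rho(u,t)\,\nu_0(u),\qquad \rho(\cdot,0)\equiv 0,
\]
and computes $|\gamma_u|$, $\nu$, $k$ and $k_{ss}$ as smooth (locally rational) functions of $\rho,\rho_u,\rho_{uu},\rho_{uuu},\rho_{uuuu}$. Projecting $\partial_t\gamma = \rho_t\nu_0$ onto the evolving normal and setting the result equal to $-k_{ss}$ yields
\[
\rho_t = -a(\rho,\rho_u,\rho_{uu})\,\rho_{uuuu} + F(\rho,\rho_u,\rho_{uu},\rho_{uuu}),
\]
with $a(0,0,0) = |\gamma_{0,u}|^{-4} > 0$; hence the linearisation at $\rho = 0$ is a uniformly fourth order parabolic operator on the period interval, with periodic boundary conditions.

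Next I would apply continuous maximal regularity for quasilinear parabolic equations of fourth order on a Hilbert-space scale containing $W^{2,2}_{\mathrm{per}}$, as developed by Amann \cite{A93,Abook,A05} and used in \cite[Theorem 3.1]{DKS02} and \cite{EG97}, to obtain a unique short-time solution $\rho$ on some interval $[0,T_0)$, with $\rho \in C^0([0,T_0);W^{2,2}_{\mathrm{per}})$, and thus a unique immersion $\gamma$ in the chosen gauge. Parabolic smoothing then upgrades regularity: differentiating the equation in $u$ and iterating interpolation inequalities in the style of Dziuk-Kuwert-Sch\"atzle gives $\rho \in C^\infty(\R\times(0,T_0))$. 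Reparametrising by arc-length at each time yields property (iii); periodicity is preserved because the PDE is translation invariant in $u$, and the maximal $T\in(0,\infty]$ is defined as the supremum of such short-time existence intervals. Uniqueness modulo the natural invariance group follows since any two arc-length parametrised solutions can, after possibly rotating and translating, be written as normal graphs over a common reference curve and then compared by uniqueness in the quasilinear framework.

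The main obstacle is the low regularity of the initial data: the hypothesis is $\gamma_0 \in C^1\cap W^{2,2}$ with $\vn{k}_2 < \infty$, so $\rho\equiv 0$ lies in a borderline trace space for the abstract maximal regularity theory, and there is no a priori $C^2$ bound from which to obtain immediate classical solutions. This is precisely where one must invoke the refined statement of \cite[Theorem 3.1]{DKS02}, whose argument treats initial data of exactly this minimal Sobolev type, rather than attempting an ab initio proof; everything else (periodicity, smoothing, uniqueness) is then routine parabolic machinery.
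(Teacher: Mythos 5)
The paper gives no proof of Theorem~\ref{STE}; it declares the result standard and cites a combination of Elliot--Garcke \cite{EG97} and Dziuk--Kuwert--Sch\"atzle \cite[Theorem 3.1]{DKS02}, pointing to the maximal-regularity/semigroup literature (Angenent, Amann, Escher--Meyer--Simonett, Lunardi) as the route. Your normal-graph gauge-fixing, Amann-style maximal regularity, parabolic smoothing, and the appeal to \cite{DKS02} to handle the borderline $W^{2,2}$ initial data is precisely the standard argument the paper is invoking by reference, so your sketch matches the paper's implicit approach.
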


Theorem \ref{STE} justifies the use of smooth calculations in the derivation of our estimates.  When
we use the expression ``$\gamma:\S^1\times[0,T)\rightarrow\R^2$ solves \eqref{CD}'' we are invoking
Theorem \ref{STE} in the special case where the initial data is assumed to be closed, but not necessarily embedded..

We will need the following elementary Sobolev-Poincar\'e-Wirtinger inequalities.

\begin{lem}
\label{WI}
Suppose $f:\R\rightarrow\R$ is absolutely continuous and periodic with period $P$.  Then if $\int_0^P f\, dx = 0$ we have 
\[
\int_0^P f^2 dx \le \frac{P^2}{4\pi^2}\int_0^P |f_x|^2 dx,
\]
with equality if and only if $f(x) = a\sin(2x\pi/P+b)$.
\end{lem}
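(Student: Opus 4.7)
The plan is to prove the inequality by expanding $f$ in a Fourier series on $[0,P]$ and applying Parseval's identity twice: once to the function and once to its derivative.

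First I would write
\[
f(x) = \sum_{n\in\mathbb{Z}} c_n\, e^{2\pi i n x/P},
\]
which is justified because absolute continuity plus periodicity gives $f\in L^2([0,P])$ (and in fact $f_x\in L^2$ as well, since the inequality is trivial otherwise). The mean-zero hypothesis $\int_0^P f\,dx = 0$ forces $c_0 = 0$. Differentiating term-by-term (legitimate in the $L^2$ sense since $f_x \in L^2$), the Fourier coefficients of $f_x$ are $\frac{2\pi i n}{P} c_n$.

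Next I would apply Parseval's identity to both sides:
\[
\int_0^P f^2\,dx = P\sum_{n\neq 0} |c_n|^2, \qquad \int_0^P |f_x|^2\,dx = P\sum_{n\neq 0} \frac{4\pi^2 n^2}{P^2}|c_n|^2.
\]
Since $n^2 \ge 1$ for every $n\neq 0$, term-by-term comparison yields
\[
\int_0^P |f_x|^2\,dx \ge \frac{4\pi^2}{P^2}\int_0^P f^2\,dx,
\]
which is the desired inequality.

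For the equality case, the comparison $n^2 \ge 1$ is strict precisely when $|n|\ge 2$, so equality forces $c_n = 0$ for all such $n$. This leaves $f(x) = c_1 e^{2\pi i x/P} + c_{-1} e^{-2\pi i x/P}$, and since $f$ is real we have $c_{-1} = \overline{c_1}$; writing $c_1$ in polar form and applying a standard trigonometric identity gives exactly $f(x) = a\sin(2\pi x/P + b)$ for some $a,b\in\mathbb{R}$. Conversely, direct substitution shows any such $f$ achieves equality.

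The only step requiring any real care is justifying the Fourier expansion and termwise differentiation under the stated regularity, which is standard but worth noting: absolute continuity of $f$ on $[0,P]$ together with periodicity gives $f_x\in L^1_{\mathrm{loc}}$ and the fundamental theorem of calculus, and we may reduce to the case $f_x \in L^2$ since otherwise the right-hand side of the inequality is infinite and there is nothing to prove. Beyond this bookkeeping, the argument is essentially a one-line consequence of Parseval.
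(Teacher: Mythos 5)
Your proof is correct and takes exactly the approach the paper indicates: the paper's own proof is the one-line instruction ``Expand $f$ as a Fourier series and then use Parseval's identity,'' and your write-up simply carries this out in detail, including the equality case. Nothing to add.
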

\begin{proof}
Expand $f$ as a Fourier series and then use Parseval's identity.
\end{proof}

\begin{cor}
\label{LU}
Under the assumptions of Lemma \ref{WI},
\[
\vn{f}_\infty^2 \le \frac{P}{2\pi}\vn{f_x}_2^2.
\]
\end{cor}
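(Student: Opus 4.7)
The plan is to combine the fundamental theorem of calculus with Lemma \ref{WI} applied to $f$ itself, using the mean-zero hypothesis only to locate a zero of $f$ that can serve as a basepoint.

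First I would note that since $f$ is continuous, periodic, and satisfies $\int_0^P f\,dx = 0$, either $f \equiv 0$ (in which case the inequality is trivial) or $f$ takes both positive and negative values and so, by the intermediate value theorem, has a zero. Translating the variable if necessary, I may assume $f(0)=0$, and by periodicity also $f(P)=0$.

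Next, for any $x \in [0,P]$, writing $f^2$ via the fundamental theorem of calculus from each endpoint gives the two identities
\[
f^2(x) = 2\int_0^x f\,f_s\,ds \quad\text{and}\quad f^2(x) = -2\int_x^P f\,f_s\,ds.
\]
Averaging these and applying the triangle inequality yields
\[
f^2(x) \le \int_0^P |f|\,|f_s|\,ds \le \vn{f}_2\,\vn{f_x}_2
\]
by Cauchy--Schwarz, with the right-hand side independent of $x$.

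Finally, since $f$ is mean-zero, Lemma \ref{WI} applies and gives $\vn{f}_2 \le \frac{P}{2\pi}\vn{f_x}_2$. Substituting this bound into the previous inequality produces
\[
\vn{f}_\infty^2 \le \frac{P}{2\pi}\vn{f_x}_2^2,
\]
as claimed. There is no real obstacle here; the only point requiring a moment of thought is the existence of a basepoint zero, which is immediate from continuity and the mean-zero hypothesis.
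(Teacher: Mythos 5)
Your proof is correct and takes essentially the same route as the paper: locate a zero of $f$ via the mean-zero hypothesis, express $f^2(x)$ through the fundamental theorem of calculus from endpoints, combine with the triangle inequality and Cauchy--Schwarz, and finish by invoking Lemma \ref{WI}. The paper uses two zeros $p_1 < p_2$ rather than translating to put a zero at the origin and averaging, but this is only a cosmetic difference.
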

\begin{proof}
As $f$ has zero average, there exist $p_1,p_2$ such that $f(p_1) = f(p_2) = 0$ and $0 \le p_1 < p_2 < P$.  Thus, since $f$ is
absolutely continuous and periodic,
\[
f^2(x) = \int_{p_1}^x f(u)f_x(u)\, du - \int_x^{p_2} f(u)f_x(u)\, du.
\]
Therefore
\[
f^2(x) \le \int_{p_1}^{p_2} |f(u)f_x(u)|\, du \le \int_0^P |f(u)f_x(u)|\, du.
\]
Now H\"older's inequality and Lemma \ref{WI} above implies
\[
\vn{f}_\infty^2 \le \vn{f}_2\vn{f_x}_2 \le \frac{P}{2\pi}\vn{f_x}_2^2,
\]
as required.
\end{proof}

As most of our analysis is based on integral estimates, it is efficient to first compute the derivative of an integral along the flow in general.

\begin{lem}
Suppose $\gamma:\S^1\times[0,T)\rightarrow\R^2$ solves \eqref{CD}, and $f:\S^1\times[0,T)\rightarrow\R$ is a periodic function with the same period as $\gamma$.
Then
\[
\rd{}{t}\int_\gamma f ds
 =  \int_\gamma f_t + fkk_{ss} - f_s(\partial^\top_t\!\gamma) ds.
\]
\label{DI}
\end{lem}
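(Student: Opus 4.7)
The plan is to reduce the computation to the known formula for $\partial_t|\gamma_u|$ and then integrate by parts on a tangential divergence term. First I would convert to the fixed $u$-parametrization: since $\int_\gamma f\, ds = \int_0^P f|\gamma_u|\,du$ and the domain $[0,P]$ is independent of $t$, differentiation commutes with the integral and I obtain
\[
\rd{}{t}\int_\gamma f\, ds = \int_0^P \Big(f_t|\gamma_u| + f\,\partial_t|\gamma_u|\Big)\,du.
\]
So the entire content of the lemma is the identification of $\partial_t|\gamma_u|$.

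To find it I would decompose the velocity as $\gamma_t = -k_{ss}\nu + \partial^\top_t\!\gamma$ and write the tangential part as $\partial^\top_t\!\gamma = G\tau$ for a scalar $G$. Differentiating $|\gamma_u|^2 = \IP{\gamma_u}{\gamma_u}$ in $t$ gives $|\gamma_u|\,\partial_t|\gamma_u| = \IP{\gamma_{ut}}{\gamma_u}$. Using $\gamma_u = |\gamma_u|\tau$ together with the Frenet identities $\partial_u\tau = k|\gamma_u|\nu$ and $\partial_u\nu = -k|\gamma_u|\tau$ to expand
\[
\gamma_{ut} = \partial_u\!\big({-k_{ss}}\nu + G\tau\big)
 = \big({-|\gamma_u|k_{sss}} + kG|\gamma_u|\big)\nu + \big(|\gamma_u|G_s + kk_{ss}|\gamma_u|\big)\tau,
\]
only the tangential component survives the inner product with $\gamma_u$, yielding $\partial_t|\gamma_u| = (G_s + kk_{ss})|\gamma_u|$.

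Substituting back and converting $|\gamma_u|\,du$ to $ds$ produces
\[
\rd{}{t}\int_\gamma f\, ds = \int_\gamma\big(f_t + fkk_{ss} + fG_s\big)\,ds.
\]
Since $f$ and $G$ inherit the period $P$ from $\gamma$ and $f$, an arc-length integration by parts on the last term is justified and gives $\int_\gamma fG_s\,ds = -\int_\gamma f_s G\,ds$. Reading $f_s G$ as the scalar $f_s(\partial^\top_t\!\gamma)$ in the paper's compact notation recovers the stated formula.

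There is no real obstacle; the only care required is the bookkeeping of signs in the Frenet relations and keeping track of which objects are scalars versus vectors when writing $\partial^\top_t\!\gamma = G\tau$, so that the shorthand $f_s(\partial^\top_t\!\gamma)$ lines up correctly with $f_s G$ after the integration by parts.
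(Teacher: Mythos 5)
Your computation of $\partial_t|\gamma_u|$ is correct, and you arrive at the right formula, but the route silently switches parametrization conventions in a way that bypasses the subtlety the paper's proof is actually designed to handle. The paper keeps $\gamma$ parametrised by arc-length, and by Theorem \ref{STE} the period is $P(t)=L(\gamma(\cdot,t))$, which decreases in time. In that convention your opening claim ``the domain $[0,P]$ is independent of $t$, so differentiation commutes with the integral'' is simply false: differentiating $\int_0^{P(t)} f|\gamma_u|\,du$ produces the boundary term $f(P,t)|\gamma_u(P)|P'(t)$ that the paper tracks explicitly. Likewise, your justification of the integration by parts, ``$G$ inherits the period $P$,'' also fails in the paper's framework; the paper remarks explicitly that $\partial_t^\top\!\gamma$ is \emph{not} periodic, with defect $\partial_t^\top\!\gamma(u,t)-\partial_t^\top\!\gamma(u+P(t),t)=|\gamma_u|P'(t)$. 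The paper's proof works precisely because this defect produces a boundary term in the integration by parts that cancels the moving-endpoint boundary term.

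What you have written is a correct argument for the \emph{other} natural convention: fix the $u$-interval once and for all and let $|\gamma_u|$ absorb the length change. There the period really is $t$-independent, the tangential scalar $G$ really is periodic, and your two clean steps are valid. But then the $\partial_t^\top\!\gamma$ and $f_t$ appearing in your final line are computed in that fixed-$u$ convention, not the paper's. To turn this into a proof of the lemma as used in the paper, you should either state the convention explicitly and then observe that the combination $f_t - f_s\,\partial_t^\top\!\gamma$ is a material derivative and hence reparametrisation-invariant (so the formula coincides with the arc-length version), or else work in arc-length from the start and confront the two cancelling boundary terms as the paper does. As written, the proposal asserts two facts that are false in the paper's own setup without flagging the change of framework.
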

\begin{proof}
First note that $\tau = \gamma_u/|\gamma_u| = \gamma_s$ is a unit tangent vector field along $\gamma$.  We compute
\begin{align*}
\pd{}{t}|\gamma_u|^2
 &= 2\IP{\gamma_{ut}}{\gamma_u}
  = 2\IP{\partial_u\big((\partial_t^\perp\!\gamma)\nu + (\partial_t^\top\!\gamma)\tau\big)}{|\gamma_u|\tau}
\\
 &= 2\partial_t^\perp\!\gamma\IP{\partial_u\nu}{|\gamma_u|\tau}
   + 2|\gamma_u|\partial_u\partial_t^\top\!\gamma
\\
 &= -2\partial_t^\perp\!\gamma|\gamma_u|\IP{k|\gamma_u|\tau}{\tau}
   + 2|\gamma_u|\partial_u\partial_t^\top\!\gamma
\\
 &= -2k\partial_t^\perp\!\gamma|\gamma_u|^2
   + 2(\partial_s\partial_t^\top\!\gamma)|\gamma_u|^2.
\end{align*}
Therefore
\[
\pd{}{t}ds 
 = kk_{ss} ds
   + (\partial_s\partial_t^\top\!\gamma) ds.
\]
Using this we differentiate the integral to find
\begin{align*}
\rd{}{t}\int_\gamma f ds
 &=  \rd{}{t}\int_0^P f |\gamma_u| du
\\
 &=  \int_0^P f_t ds
   + \int_0^P f(kk_{ss} + \partial_s\partial^\top_t\!\gamma) ds
   + f(P,t) |\gamma_u(P)| P'
\\
 &=  \int_\gamma f_t + fkk_{ss} - f_s(\partial^\top_t\!\gamma) ds.
\end{align*}
We obtained the last equality using integration by parts and the periodicity of $\gamma$ with the identity
\[
\partial_t^\top\!\gamma(u,t)
-
\partial_t^\top\!\gamma(u+P(t),t)
=
|\gamma_u(u,t)|P'(t),
\]
which in turn follows from the definition of $\gamma$.  (Note in particular that the tangential velocity $\partial_t^\top\!\gamma$ is not periodic.)
\end{proof}

\section{Curvature estimates in $L^2$ and the isoperimetric ratio}

The evolution equation \eqref{CD} is particularly natural as solutions decrease in length while keeping enclosed area fixed.
This is only necessarily true for curves immersed in $\R^2$, and in fact this is the chief reason why we consider plane curves as opposed to curves in $\R^n$ or immersed in a manifold.

\begin{lem}
\label{LA}
Suppose $\gamma:\S^1\times[0,T)\rightarrow\R^2$ solves \eqref{CD}.
Then
\[
\rd{}{t}L = -\int_{\gamma} k_s^2 ds,\qquad \text{and}\qquad \rd{}{t}A = 0.
\]
In particular, the isoperimetric ratio decreases in absolute value with velocity
\[
\rd{}{t}I = -\frac{2I}{L}\int_{\gamma} k_s^2 ds.
\]
\end{lem}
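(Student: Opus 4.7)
The plan is to derive all three identities from Lemma \ref{DI}, applied to well-chosen integrands, together with a quick chain rule at the end.

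\textbf{Step 1 (length).} I would apply Lemma \ref{DI} to the constant integrand $f\equiv 1$. All the terms $f_t$ and $f_s$ vanish and we are left with
\[
\rd{}{t}L = \int_\gamma kk_{ss}\,ds = -\int_\gamma k_s^2\,ds,
\]
where the integration by parts is justified by periodicity of $\gamma$. Note that the tangential component $\partial_t^\top\gamma$ is invisible here, as one expects for a geometric quantity.

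\textbf{Step 2 (area).} This is the step that needs the most care. I would use Lemma \ref{DI} with $f = -\tfrac12\IP{\gamma}{\nu}$ and compute $f_t$ and $f_s$ explicitly. One finds $f_s = -\tfrac12\IP{\tau}{\nu} - \tfrac12 \IP{\gamma}{\nu_s} = \tfrac12 k\IP{\gamma}{\tau}$ using the Frenet relation $\nu_s=-k\tau$, and for $f_t$ I would decompose $\partial_t\gamma = -k_{ss}\nu + (\partial_t^\top\gamma)\tau$ and use $\partial_t\nu \perp \nu$ together with $\IP{\partial_t\nu,\tau} = -\IP{\nu,\partial_t\tau}$ to write $\partial_t\nu$ as a purely tangential vector. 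Plugging into the formula of Lemma \ref{DI} and integrating by parts using periodicity, the terms involving the tangential velocity $\partial_t^\top\gamma$ cancel, as they must (the integral $\int_\gamma \IP{\gamma}{\nu}\,ds$ is reparametrisation-invariant), and one is left with
\[
\rd{}{t}A = \int_\gamma k_{ss}\,ds = 0,
\]
again by periodicity. As a sanity check (and an alternative derivation), Green's theorem gives $2A = \int(xdy - y dx)$, whose time derivative after one integration by parts reduces to $-\int_\gamma \IP{\partial_t^\perp\gamma}{\nu}\,ds = \int_\gamma k_{ss}\,ds = 0$; this is the standard fact that the enclosed area is insensitive to tangential reparametrisation and changes only through the integral of the normal speed.

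\textbf{Step 3 (isoperimetric ratio).} Since $A$ is constant and $I = L^2/(4\pi A)$, the chain rule gives
\[
\rd{}{t}I = \frac{2L}{4\pi A}\rd{}{t}L = -\frac{L}{2\pi A}\int_\gamma k_s^2\,ds = -\frac{2I}{L}\int_\gamma k_s^2\,ds,
\]
which is the claimed formula. The decrease in absolute value then follows immediately because the right-hand side has the opposite sign to $I$ (both $A$ and $L^2$ keep their signs along the flow).

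The only nontrivial point is Step 2: checking that the tangential-velocity contributions in Lemma \ref{DI} really do cancel against the $t$-derivative of $\nu$ inside $f_t$. Everything else is bookkeeping.
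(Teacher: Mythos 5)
Your proposal is correct and follows essentially the same route as the paper: apply Lemma~\ref{DI} with $f\equiv 1$ and $f=-\tfrac12\IP{\gamma}{\nu}$, compute $\nu_t$ from the orthonormality relations to verify the tangential terms cancel, and finish with integration by parts and the chain rule. The Green's theorem sanity check for the area and the explicit chain-rule line for $I$ are welcome additions (the paper leaves the latter implicit), but the substance is identical.
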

\begin{proof}
Lemma \ref{DI} with $f \equiv 1$ gives
\[
\rd{}{t}L
 = \int_{\gamma} kk_{ss} ds
 = -\int_{\gamma} k_s^2 ds,
\]
where we used integration by parts and the periodicity of the curve.
For the area, we first note that
\begin{equation}
\tau_s = k\nu,\quad \nu_s = -k\tau,\quad \nu_t = k_{sss}\tau - (\partial^\top_t\!\gamma)k\tau.
\label{SF}
\end{equation}
The first two relations are immediate from differentiating $\IP{\tau}{\nu} = 0$ and using the definition of the curvature.
For the third, we first compute the commutator of the arc-length and time derivatives:
\begin{align}
\partial_{ts}
 &= \partial_t \big(|\gamma_u|^{-1} \partial_u\big)
 = |\gamma_u|^{-1} \partial_{tu} - |\gamma_u|^{-2} \big(\partial_t|\gamma_u|\big) \partial_{u} 
\notag\\
 &= \partial_{st} 
  + k\big(\partial_t^\perp\!\gamma\big)\partial_s
  - \big(\partial_s\partial_t^\top\!\gamma\big)\partial_s
\notag\\ \label{COM}
 &= \partial_{st} 
  - kk_{ss}\partial_s
  - \big(\partial_s\partial_t^\top\!\gamma\big)\partial_s.
\end{align}
Using this and the first two equalities in \eqref{SF} we compute the evolution of the unit tangent vector field $\tau$.
\begin{align}
\partial_t\tau
 &= \partial_{ts}\gamma
\notag
\\
 &= \partial_s\big( (\partial^\perp_t\!\gamma)\nu + (\partial^\top_t\!\gamma)\tau \big)
  - kk_{ss}\gamma_s
  - \big(\partial_s\partial_t^\top\!\gamma\big)\gamma_s
\notag
\\
 &= -k_{sss}\nu - k_{ss}\nu_s
  + (\partial^\top_t\!\gamma)\gamma_{ss}
  - kk_{ss}\gamma_s
\notag
\\
 &= -k_{sss}\nu
  + (\partial^\top_t\!\gamma)\gamma_{ss}.
\label{ET}
\end{align}
Noting that $|\nu|^2 = 1$ implies $\nu_t$ has no normal component, we obtain the final equality in \eqref{SF} by differentiating $\IP{\nu}{\tau}$
\begin{align*}
\IP{\partial_t\nu}{\tau}
 &= -\IP{\nu}{\partial_t\tau}
 = -\IP{\nu}{-k_{sss}\nu + (\partial^\top_t\!\gamma)\gamma_{ss}}
\\
 &= k_{sss} - k (\partial_t^\top\!\gamma).
\end{align*}
Returning to the area functional, we can now directly evaluate the derivative.
\begin{align*}
\rd{}{t}A
 &= -\frac{1}{2}\rd{}{t}\int_{\gamma} \IP{\gamma}{\nu} ds
\\
 &= -\frac{1}{2}\int_{\gamma} -k_{ss} + \IP{\gamma}{\nu_t} + \IP{\gamma}{\nu}kk_{ss} - (\partial^\top_t\!\gamma)\IP{\gamma}{\nu_s} ds
\\
 &= -\frac{1}{2}\int_{\gamma} \IP{\gamma}{k_{sss}\tau - k(\partial^\top_t\!\gamma)\tau} + \IP{\gamma}{\nu}kk_{ss} - (\partial^\top_t\!\gamma)\IP{\gamma}{-k\tau} ds
\\
 &= -\frac{1}{2}\int_{\gamma} -k_{ss} - \IP{\gamma}{\tau_s}k_{ss} + \IP{\gamma}{\nu}kk_{ss} ds
\\
 &= 0,
\end{align*}
where we used Lemma \ref{DI} with $f = \IP{\gamma}{\nu}$ in the second line and integration by parts, the periodicity of $\gamma$ and the formulae \eqref{SF} throughout.

\end{proof}

We now turn our attention to the scale-invariant quantity
\[
\Ko = L\int_{\gamma} \big(k-\kav\big)^2 ds,
\]
where
\[
\kav = \frac{1}{L}\int_{\gamma} k ds.
\]
Note that we have (and will continue to) suppressed the dependence of $\Ko$ and $L$ on $\gamma(\cdot,t)$.
When we must indicate the dependence of $\Ko$ and $L$ on $\gamma(\cdot,t)$, we shall use the notation $\Ko(t) =
\Ko\big(\gamma(\cdot,t)\big)$ and $L(t) = L\big(\gamma(\cdot,t)\big)$.

A fundamental observation is that Lemmas \ref{LA} and \ref{WI} together imply $\Ko\in L^1([0,T))$.

\begin{lem}
\label{FO}
Suppose $\gamma:\S^1\times[0,T)\rightarrow\R^2$ solves \eqref{CD}.  Then
\[
\vn{\Ko}_1 < L^4(0)/16\pi^2.
\]
\end{lem}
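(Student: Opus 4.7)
The plan is to combine the Wirtinger inequality from Lemma \ref{WI} applied to $f = k - \kav$ with the length decay formula from Lemma \ref{LA} to express $\Ko$ as controlled by the time derivative of a power of $L$, then integrate in time.

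First I would note that $k(\cdot,t) - \kav(t)$ is smooth (by Theorem \ref{STE}) and, viewed as a function of arc-length, periodic with period $L(t)$ and of zero mean by definition of $\kav$. Applying Lemma \ref{WI} with $P = L$ therefore yields
\[
\int_\gamma (k-\kav)^2\, ds \le \frac{L^2}{4\pi^2}\int_\gamma \big((k-\kav)_s\big)^2\, ds = \frac{L^2}{4\pi^2}\int_\gamma k_s^2\, ds,
\]
since $\kav$ is constant in $s$. Multiplying through by $L$ and invoking the first identity of Lemma \ref{LA},
\[
\Ko = L\int_\gamma (k-\kav)^2\, ds \le \frac{L^3}{4\pi^2}\int_\gamma k_s^2\, ds = -\frac{L^3}{4\pi^2}\,\rd{L}{t}.
\]

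Integrating from $0$ to $t$ and using the fundamental theorem of calculus on $L^4$ gives
\[
\int_0^t \Ko(\tau)\, d\tau \le -\frac{1}{16\pi^2}\int_0^t \rd{}{\tau}L^4\, d\tau = \frac{L^4(0) - L^4(t)}{16\pi^2}.
\]
Since $\gamma(\cdot,t)$ remains a regular immersed curve on $[0,T)$, one has $L(t) > 0$, and thus the bound is strict: $\int_0^t \Ko\, d\tau < L^4(0)/16\pi^2$. Letting $t \nearrow T$ (and using monotone convergence, as $\Ko \ge 0$) yields $\vn{\Ko}_1 < L^4(0)/16\pi^2$ as desired.

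There is no real obstacle in this argument; the only point requiring a moment of care is verifying that the hypotheses of Lemma \ref{WI} apply to $k - \kav$ at each fixed time, which follows from the smoothness and periodicity of $\gamma(\cdot,t)$ guaranteed by Theorem \ref{STE}, together with the observation that $\partial_s$ annihilates the (spatially constant) average $\kav(t)$.
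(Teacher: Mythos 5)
Your proof is correct and follows the paper's argument exactly: apply Lemma \ref{WI} to $f = k - \kav$, use $\rd{L}{t} = -\vn{k_s}_2^2$ from Lemma \ref{LA} to recognize $\frac{L^3}{4\pi^2}\vn{k_s}_2^2$ as $-\frac{1}{16\pi^2}\rd{}{t}L^4$, and integrate. Your additional remark that $L(t) > 0$ gives the strict inequality is a small extra care the paper leaves implicit.
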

\begin{proof}
Applying Lemma \ref{WI} with $f=k-\kav$ and recalling Lemma \ref{LA} we have
\[
\Ko \le \frac{L^3}{4\pi^2}\vn{k_s}_2^2 = - \frac{1}{16\pi^2}\rd{}{t}L^4,
\]
so
\[
\int_0^t\Ko\, d\tau \le \frac{L^4(0)}{16\pi^2}.
\qedhere
\]
\end{proof}

The above lemma holds regardless of initial data, and appears to indicate that the quantity $\Ko$ is a natural `energy' for
the flow.

\begin{rmk}
A similar argument as above also shows that $\vn{k_s}^2_2 \in L^1([0,T))$ with the estimate $\vn{\vn{k_s}_2^2}_1 \le L(0)$.
Although we will not need this fact, it does suggest that $\vn{k_s}^2_2$ is another well-behaved quantity under the flow.
\end{rmk}

There exists an $\omega\in\R$ satisfying
\begin{equation}
\int_{\gamma} k\, ds\bigg|_{t=0} = 2\omega\pi.
\label{TC}
\end{equation}
In the case where the solution is a family of closed curves, $\omega$ is the winding number of $\gamma(\cdot,0)$.
Since the solution is a one-parameter family of smooth diffeomorphisms, and the winding number is a topological invariant, the winding number of the curves $\gamma(\cdot,t)$ remains constant.
This can also be directly proven as in the lemma below.

\begin{lem}
\label{WN}
Suppose $\gamma:\S^1\times[0,T)\rightarrow\R^2$ solves \eqref{CD} and
\[
\int_{\gamma} k\, ds\bigg|_{t=0} = 2\omega\pi.
\]
Then
\[
\int_{\gamma} k\, ds = 2\omega\pi.
\]
In particular, the average curvature increases in absolute value with velocity
\[
\rd{}{t}\kav = \frac{2\omega\pi}{L^2}\vn{k_s}_2^2.
\]
\end{lem}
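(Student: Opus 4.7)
The constancy of $\int_\gamma k\,ds$ can be proved topologically (winding number is invariant under smooth deformation), but to stay consistent with the paper's computational style I would apply Lemma \ref{DI} with $f=k$ and show the resulting integrand is a pure $s$-derivative. This gives
\[
\rd{}{t}\int_\gamma k\,ds = \int_\gamma \big(k_t + k^2 k_{ss} - k_s(\partial_t^\top\!\gamma)\big)\,ds,
\]
so everything reduces to computing the evolution equation for $k$.

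For that I would start from the identity $\tau_s = k\nu$ and differentiate in $t$. Using the commutator relation \eqref{COM} to rewrite $\partial_t\tau_s$ as $\partial_s\tau_t - kk_{ss}\tau_s - (\partial_s\partial_t^\top\!\gamma)\tau_s$, then substituting the evolution of $\tau$ given by \eqref{ET} and using $\nu_s = -k\tau$ to expand $\partial_s\tau_t$, I take the inner product with $\nu$. Since $|\nu|^2=1$ forces $\IP{\nu_t}{\nu}=0$, the normal component yields
\[
k_t = -k_{ssss} - k^2 k_{ss} + k_s(\partial_t^\top\!\gamma),
\]
after the two occurrences of $k\,\partial_s\partial_t^\top\!\gamma$ (one from $\partial_s\tau_t$, one from the commutator acting on $\tau_s = k\nu$) cancel.

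Plugging this $k_t$ into the displayed formula above collapses the integrand to $-k_{ssss}$, whose integral over one period of the closed curve vanishes. Hence $\int_\gamma k\,ds$ is conserved and equals $2\omega\pi$ for all $t\in[0,T)$. The formula for $\rd{}{t}\kav$ is then immediate: writing $\kav = 2\omega\pi/L$ and invoking Lemma \ref{LA} for $\rd{}{t}L = -\vn{k_s}_2^2$ gives
\[
\rd{}{t}\kav = -\frac{2\omega\pi}{L^2}\rd{}{t}L = \frac{2\omega\pi}{L^2}\vn{k_s}_2^2.
\]

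The only real obstacle is the bookkeeping in deriving $k_t$: the tangential velocity $\partial_t^\top\!\gamma$ enters through both \eqref{COM} and \eqref{ET}, and one has to track the normal versus tangential components carefully to confirm the $\partial_s\partial_t^\top\!\gamma$ terms cancel, leaving the clean reparametrisation-covariant expression above. Once that is in place, the rest of the lemma is one line.
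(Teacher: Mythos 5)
Your proof is correct and follows essentially the same route as the paper: derive the evolution equation for $k$ via the commutator relation \eqref{COM} and the evolution of $\tau$ from \eqref{ET}, observe the cancellation of the $k\,\partial_s\partial_t^\top\!\gamma$ terms to obtain $k_t = -k_{ssss}-k^2k_{ss}+(\partial_t^\top\!\gamma)k_s$, feed this into Lemma \ref{DI} with $f=k$ so the integrand collapses to $-k_{ssss}$, and then read off the $\kav$ formula from $\kav=2\omega\pi/L$ and Lemma \ref{LA}. The only cosmetic difference is that you begin from $\tau_s=k\nu$ while the paper begins from $k=\IP{\nu}{\gamma_{ss}}$, but these are the same identity.
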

\begin{proof}
Differentiating $|\nu|^2 = 1$ and $|\tau|^2 = 1$ gives that $\IP{\nu_t}{\tau_s} = 0$ and $\IP{\nu}{\tau_{ss}} = 0$.
Using this and \eqref{SF}, \eqref{ET}, we compute the evolution of the curvature as
\begin{align}
\pd{}{t}k
 &= \partial_t\IP{\nu}{\gamma_{ss}}
 = \IP{\partial_t\nu}{\gamma_{ss}} + \IP{\nu}{\partial_t\gamma_{ss}}
\notag\\
 &= \IP{\nu}{\partial_t\tau_s}
\notag\\
 &= \IP{\nu}{\partial_s\tau_t
           - kk_{ss}\tau_s
           - \big(\partial_s\partial_t^\top\!\gamma\big)\tau_s}
\notag\\
 &= -k^2k_{ss} - k\big(\partial_s\partial_t^\top\!\gamma\big)
    +\IP{\nu}{\partial_s\tau_t}
\notag\\
 &= -k^2k_{ss} 
    -\IP{\nu}{\partial_s(k_{sss}\nu)} + (\partial^\top_t\!\gamma)\IP{\nu}{\gamma_{sss}}
\notag\\ \label{EK}
 &= -k_{ssss}-k^2k_{ss} 
     + (\partial^\top_t\!\gamma)k_s.
\end{align}
Therefore, applying Lemma \ref{DI} with $f = k$ we have
\[
\rd{}{t}\int_{\gamma} k ds
 = - \int_{\gamma} k_{ssss} + k^2k_{ss} - k^2k_{ss} + (\partial^\top_t\!\gamma)k_s - (\partial^\top_t\!\gamma)k_s ds
 = 0,
\]
using integration by parts and the periodicity of $\gamma$.  This completes the proof.
\end{proof}

We now compute the evolution of $\Ko$.

\begin{lem}
\label{KoE}
Suppose $\gamma:\S^1\times[0,T)\rightarrow\R^2$ solves \eqref{CD}.  Then
\begin{align*}
\rd{}{t}\Ko &+ \Ko\frac{\vn{k_s}_2^2}{L} + 2L\vn{k_{ss}}_2^2
\\
 &= 3L\int_{\gamma} (k-\kav)^2k^2_s ds
   + 6\kav L \int_{\gamma} (k-\kav)k_s^2 ds
   + 2\kav^2 L \vn{k_s}^2_2.
\end{align*}
\end{lem}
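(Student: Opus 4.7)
The plan is a routine product-rule calculation on $\Ko = L\int_\gamma(k-\kav)^2\,ds$. Lemma~\ref{LA} handles the $L$ factor via $\rd{L}{t} = -\vn{k_s}_2^2$; multiplying by $\int_\gamma(k-\kav)^2\,ds = \Ko/L$ produces the $-\Ko\vn{k_s}_2^2/L$ contribution that appears on the left-hand side of the stated identity.

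For the remaining factor I would apply Lemma~\ref{DI} with $f = (k-\kav)^2$, substituting the evolution equation \eqref{EK} into $f_t = 2(k-\kav)(k_t - \kav_t)$ and using Lemma~\ref{WN} for $\kav_t$. Two simplifications are automatic. The tangential piece $2(k-\kav)k_s(\partial^\top_t\!\gamma)$ produced by $f_t$ exactly cancels the $-f_s(\partial^\top_t\!\gamma)$ term in Lemma~\ref{DI}, as must happen since $\Ko$ is geometric; and the $\kav_t$ contribution integrates to zero because $\int_\gamma(k-\kav)\,ds = 0$. After multiplication by $L$, what remains is
\[
L\rd{}{t}\int_\gamma(k-\kav)^2\,ds = L\int_\gamma\Big[-2(k-\kav)k_{ssss} - 2(k-\kav)k^2 k_{ss} + (k-\kav)^2 k\,k_{ss}\Big]\,ds,
\]
where the last term is the $fkk_{ss}$ contribution from Lemma~\ref{DI}.

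Two integrations by parts on the biharmonic term produce $-2L\vn{k_{ss}}_2^2$, supplying the final correction on the left-hand side. For the cubic-in-$k$ remainder I would first simplify algebraically,
\[
-2(k-\kav)k^2 + (k-\kav)^2 k = -k(k-\kav)(k+\kav) = -k^3 + \kav^2 k,
\]
then integrate by parts once to obtain
\[
\int_\gamma(-k^3+\kav^2 k)k_{ss}\,ds = 3\int_\gamma k^2 k_s^2\,ds - \kav^2\vn{k_s}_2^2.
\]
Expanding $k^2 = (k-\kav)^2 + 2\kav(k-\kav) + \kav^2$ in the first integral then delivers the three terms $3L\int_\gamma(k-\kav)^2 k_s^2\,ds$, $6\kav L\int_\gamma(k-\kav)k_s^2\,ds$, and $2\kav^2 L\vn{k_s}_2^2$ on the right-hand side.

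The entire argument is algebraic once the two cancellations above are noted; the only real pitfall is overlooking the $fkk_{ss}$ contribution from Lemma~\ref{DI} (without it one would end up with coefficients $6$, $8$, $2$ rather than $3$, $6$, $2$ on the right), or forgetting that the tangential velocity and $\kav_t$ terms drop out. There is no analytical difficulty.
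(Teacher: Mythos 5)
Your proof is correct and follows the same route as the paper: product rule via Lemma~\ref{LA}, then Lemma~\ref{DI} with $f=(k-\kav)^2$ combined with the curvature evolution \eqref{EK}, noting the cancellation of the tangential terms and the vanishing of the $\kav_t$ contribution. The only (cosmetic) difference is that you first collect the cubic terms into $-k^3+\kav^2 k$ and integrate by parts once, whereas the paper integrates by parts the two pieces separately and then regroups; both arrive at the stated identity by the same elementary computation.
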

\begin{proof}
This is a direct computation.
\begin{align*}
\rd{}{t}\Ko
 &= - \int_{\gamma}k_s^2 ds \int_{\gamma} (k-\kav)^2 ds
   + 2L\int_{\gamma} (k-\kav)\big( -k_{ssss}-k^2k_{ss} 
                                + (\partial^\top_t\!\gamma)k_s \big)ds
\\
 &\qquad  + L\int_{\gamma} kk_{ss}(k-\kav)^2 - 2k_s(k-\kav)(\partial^\top_t\!\gamma) ds
\\
 &= - \Ko\frac{\vn{k_s}_2^2}{L} - 2L\vn{k_{ss}}_2^2
    + 2L\int_{\gamma} k^2k_s^2 ds
    + 4L\int_{\gamma} k(k-\kav)k^2_{s} ds
\\
 &\qquad
    - L\int_{\gamma} (k-\kav)^2k_s^2 ds
    - 2L\int_{\gamma} k(k-\kav)k^2_s ds
\\
 &= - \Ko\frac{\vn{k_s}_2^2}{L} - 2L\vn{k_{ss}}_2^2
    + 4L\int_{\gamma} k^2k_s^2 ds
    - 2\kav L\int_{\gamma} kk^2_{s} ds
\\
 &\qquad
    - L\int_{\gamma} (k-\kav)^2k_s^2 ds.
\end{align*}
Rearranging, we have
\begin{align*}
\rd{}{t}\Ko &+ \Ko\frac{\vn{k_s}_2^2}{L} + 2L\vn{k_{ss}}_2^2
\\
 &=   4L\int_{\gamma} k^2k_s^2 ds
    - 2\kav L\int_{\gamma} (k-\kav)k^2_{s} ds
    - 2\kav^2 L\int_{\gamma} k^2_{s} ds
    - L\int_{\gamma} (k-\kav)^2k_s^2 ds
\\
 &=   3L\int_{\gamma} (k-\kav)^2k_s^2 ds
    + 6\kav L\int_{\gamma} (k-\kav)k^2_{s} ds
    + 2\kav^2 L\int_{\gamma} k^2_{s} ds.
\end{align*}
This proves the lemma.
\end{proof}

Although $\Ko$ is a priori controlled in $L^1$, we need much finer control on $\Ko$ before we can assert control on other curvature quantities and deduce global existence.  (Indeed, global existence is not true in the class of solutions given by Theorem \ref{STE}.)

While $\Ko$ is small, we do have the desired control.  The following proposition gives us a pointwise estimate.

\begin{prop}
\label{DE}
Suppose $\gamma:\S^1\times[0,T)\rightarrow\R^2$ solves \eqref{CD}.  If there exists a $T^*$ such that for $t\in[0,T^*)$ we have
\begin{align*}
\Ko(t) &\le \frac{4\pi + 24\pi^2\omega^2 - 8\pi\sqrt{3\pi}\sqrt{\omega^2+3\pi\omega^4}}{3} = 2K^*,
\intertext{then during this time the estimate}
\Ko &+ 8\omega^2\pi^2\log L
 + \int_0^{t}\Ko\frac{\vn{k_s}_2^2}{L}d\tau
 \le \Ko(0) + 8\omega^2\pi^2\log L(0)
\end{align*}
holds.
\end{prop}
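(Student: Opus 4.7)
The plan is to convert Lemma \ref{KoE} into a monotone quantity by absorbing one of the algebraic terms into a $\log L$ derivative, and then to dominate the remaining curvature-quartic terms on the right-hand side by the dissipative $2L\vn{k_{ss}}_2^2$ term on the left, using the smallness hypothesis $\Ko \le 2K^*$.

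First I will use the winding-number identity $\kav L = 2\omega\pi$ (Lemma \ref{WN}) together with the length-decay identity $\rd{}{t}\log L = -\vn{k_s}_2^2/L$ (Lemma \ref{LA}) to rewrite the bad-sign term in Lemma \ref{KoE} as
\[
2\kav^2 L \vn{k_s}_2^2 = \frac{8\omega^2\pi^2\vn{k_s}_2^2}{L} = -\rd{}{t}\!\big(8\omega^2\pi^2\log L\big).
\]
Transferring this across gives the key identity
\begin{align*}
\rd{}{t}\!\big(\Ko + 8\omega^2\pi^2\log L\big) &+ \Ko\frac{\vn{k_s}_2^2}{L} + 2L\vn{k_{ss}}_2^2 \\
 &= 3L\!\int_\gamma (k-\kav)^2 k_s^2\, ds + 6\kav L\!\int_\gamma (k-\kav) k_s^2\, ds.
\end{align*}

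Next I will bound each remaining right-hand integral by a constant multiple of $L\vn{k_{ss}}_2^2$. The toolkit is: Corollary \ref{LU} applied to the zero-mean functions $k-\kav$ and $k_s$, giving $\vn{k-\kav}_\infty^2 \le (L/2\pi)\vn{k_s}_2^2$ and an analogous bound for $k_s$; the Wirtinger inequality (Lemma \ref{WI}) applied similarly; the identity $L\vn{k-\kav}_2^2 = \Ko$; and the integration-by-parts/Cauchy--Schwarz bound $\vn{k_s}_2^2 \le \vn{k-\kav}_2\vn{k_{ss}}_2$. Chaining these estimates, the first integral obeys
\[
3L\!\int_\gamma (k-\kav)^2 k_s^2\, ds \le \frac{3L\Ko}{2\pi}\vn{k_{ss}}_2^2,
\]
while the second, after using $|\kav|L = 2|\omega|\pi$ and the analogous chain, will be shown to satisfy
\[
\Big|6\kav L\!\int_\gamma(k-\kav)k_s^2\, ds\Big| \le 6|\omega|\, L\sqrt{\Ko}\,\vn{k_{ss}}_2^2.
\]

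Adding these gives $\text{RHS} \le \tfrac{L}{2\pi}\vn{k_{ss}}_2^2\bigl(3\Ko + 12\pi|\omega|\sqrt{\Ko}\bigr)$. The stated constant $2K^*$ is precisely chosen so that the condition $\Ko(t) \le 2K^*$ is equivalent to $3\Ko(t) + 12\pi|\omega|\sqrt{\Ko(t)} \le 4\pi$: one verifies directly from the quadratic formula applied to $3y^2 + 12\pi|\omega|y - 4\pi = 0$ (with $y = \sqrt{\Ko}$, using the discriminant $\sqrt{144\pi^2\omega^2+48\pi} = 4\sqrt{3\pi(1+3\pi\omega^2)}$) that $y^2 = 2K^*$ as written. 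Thus under the running hypothesis the right-hand side is majorised by $2L\vn{k_{ss}}_2^2$, which cancels the dissipative term on the left, leaving
\[
\rd{}{t}\!\big(\Ko + 8\omega^2\pi^2\log L\big) + \Ko\frac{\vn{k_s}_2^2}{L} \le 0.
\]
Integrating over $[0,t]$ gives the stated estimate.

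The subtle step is Step 2. The second integral naturally produces $\vn{k_{ss}}_2^{3/2}$ rather than $\vn{k_{ss}}_2^2$ after a single round of interpolation, so one must trade an extra half-power of $\vn{k_{ss}}_2$ at the cost of a Wirtinger factor of $L/(2\pi)$, and keep track of the constants so that the resulting quadratic closes at exactly the stated $K^*$. The rest is bookkeeping and an application of the fundamental theorem of calculus.
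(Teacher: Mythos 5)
Your proposal is correct and follows essentially the same route as the paper: absorb $2\kav^2 L\vn{k_s}_2^2$ into $-\rd{}{t}(8\omega^2\pi^2\log L)$ via Lemmas \ref{LA} and \ref{WN}, then dominate the two remaining cubic/quartic terms by $\frac{3L\Ko}{2\pi}\vn{k_{ss}}_2^2$ and $6|\omega|L\sqrt{\Ko}\vn{k_{ss}}_2^2$ using Corollary \ref{LU} and H\"older, and observe that $\Ko\le 2K^*$ is exactly the condition $3\Ko+12\pi|\omega|\sqrt{\Ko}\le 4\pi$ making the $L\vn{k_{ss}}_2^2$ coefficient nonnegative, after which one integrates. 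Your explicit verification of the $K^*$ formula via the quadratic in $\sqrt{\Ko}$ is a nice bit of bookkeeping that the paper leaves implicit.
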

\begin{proof}
Lemma \ref{WN}, Corollary \ref{LU} and H\"older's inequality implies
\[
3L\int_{\gamma} (k-\kav)^2k_s^2 ds
 \le \frac{3L}{2\pi}\Ko\vn{k_{ss}}_2^2,
\]
and
\[
6\kav L\int_{\gamma} (k-\kav)k_s^2 ds
 \le 6\omega L\sqrt{\Ko}\vn{k_{ss}}_2^2.
\]
Thus, by Lemma \ref{KoE}
\begin{align*}
\rd{}{t}\Ko &+ \Ko\frac{\vn{k_s}_2^2}{L} + \Big(2-\Ko\frac{3}{2\pi}-6\sqrt{\Ko}\omega\Big)L\vn{k_{ss}}_2^2
 \le 2\kav^2 L \vn{k_s}^2_2.
\end{align*}
Now from Lemmas \ref{LA} and \ref{WN} we know
\[
   2\kav^2 L \vn{k_s}^2_2
 = \frac{8\omega^2\pi^2}{L} \vn{k_s}^2_2
 = -8\omega^2\pi^2\rd{}{t}\log L.
\]
Using the smallness of $\Ko$ and integrating finishes the proof.
\end{proof}

It is clear that even with $\Ko$ initially small, the estimate given by Proposition \ref{DE} is useless if we can not also exert good control on the ratio $L(0)/L(t)$.
In particular, we require that it remains only slightly larger than one.
This is easily achieved in the case we are interested in, that of closed curves with $\omega = 1$,
by an application of the isoperimetric inequality and Lemma \ref{LA}.  Observe
\[
\frac{L(0)}{L(t)}
 \le \frac{L(0)}{\sqrt{4\pi A(t)}}
 =   \frac{L(0)}{\sqrt{4\pi A(0)}}
 = \sqrt{I(0)},
\]
and this can be made arbitrarily close to one.

\begin{prop}
\label{EE1}
Suppose $\gamma:\S^1\times[0,T)\rightarrow\R^2$ solves \eqref{CD} and satisfies \eqref{Ewind1}.
Then
\[
\Ko(0) < K^*,\quad\text{ and }\quad I(0) < \exp\Big(\frac{K^*}{8\pi^2}\Big),
\]
implies
\[
\Ko \le 2K^*
\]
for all $t\in[0,T)$.
\end{prop}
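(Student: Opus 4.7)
The plan is to run a standard continuity / bootstrap argument using Proposition \ref{DE} as the engine, together with the observation (made just before the proposition's statement) that the isoperimetric ratio controls $L(0)/L(t)$ from above.

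First I would define
\[
T^* = \sup\Bigl\{\,t\in[0,T) : \Ko(\tau) \le 2K^* \text{ for all } \tau\in[0,t]\,\Bigr\}.
\]
Since $\Ko(0) < K^* < 2K^*$ and $\Ko(\cdot,t)$ is continuous in $t$ by smoothness of the flow (Theorem \ref{STE}), $T^* > 0$. The goal is to show $T^* = T$, and I would argue by contradiction: suppose $T^* < T$, so that continuity forces $\Ko(T^*) = 2K^*$.

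On $[0,T^*)$ the hypotheses of Proposition \ref{DE} are satisfied (with $\omega=1$), so
\[
\Ko(t) + 8\pi^2 \log L(t) \le \Ko(0) + 8\pi^2 \log L(0),
\]
that is $\Ko(t) \le \Ko(0) + 8\pi^2\log\bigl(L(0)/L(t)\bigr)$. Next I would invoke the constancy of area from Lemma \ref{LA} together with the isoperimetric inequality $L(t)^2 \ge 4\pi A(t)$, exactly as in the computation displayed above the proposition, to obtain
\[
\frac{L(0)}{L(t)} \le \frac{L(0)}{\sqrt{4\pi A(t)}} = \frac{L(0)}{\sqrt{4\pi A(0)}} = \sqrt{I(0)}.
\]
Substituting and using the smallness assumption $I(0) < \exp(K^*/8\pi^2)$ gives
\[
\Ko(t) \le \Ko(0) + 4\pi^2 \log I(0) < K^* + \tfrac{1}{2}K^* = \tfrac{3}{2}K^*
\]
for all $t\in[0,T^*)$. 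By continuity this inequality extends to $t=T^*$, yielding $\Ko(T^*) \le \tfrac{3}{2}K^* < 2K^*$, which contradicts $\Ko(T^*) = 2K^*$. Hence $T^* = T$ and the claim follows.

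No single step is really the main obstacle; the proposition is essentially a packaging of Proposition \ref{DE} with the isoperimetric inequality via a continuity argument. The only subtle point to be a little careful about is verifying that the hypothesis of Proposition \ref{DE} is being invoked with the correct form of $K^*$ (the value for $\omega=1$), and that the gap between the bound $\tfrac{3}{2}K^*$ obtained and the threshold $2K^*$ is strict, so the continuity contradiction actually closes.
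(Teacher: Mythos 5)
Your proposal is correct and follows essentially the same continuity/bootstrap argument as the paper: define the maximal $T^*$ on which $\Ko \le 2K^*$, apply Proposition \ref{DE} together with the isoperimetric inequality and Lemma \ref{LA} to get $\Ko < \tfrac{3}{2}K^*$ on $[0,T^*)$, and conclude by contradiction. The only (cosmetic) difference is that you spell out the continuity reasoning a bit more explicitly, but the logic is identical.
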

\begin{proof}
From Theorem \ref{STE} and the smallness assumption there exists a maximal $T^* > 0$ such that
$\Ko(t) \le 2K^*$ for $t\in [0,T^*)$.
Suppose $T^* < T$.
Applying Proposition \ref{DE}, Lemma \ref{LA}, the isoperimetric inequality and the smallness assumption we have
\[
\Ko \le \Ko(0) + 8\pi^2\log\sqrt I
    < \frac{3K^*}{2},
\]
for all $t\in[0,T^*)$.  Taking $t\rightarrow\delta$ we arrive at a contradiction, and so $\delta = T$.
This finishes the proof.
\end{proof}

\section{Global existence}

We shall first prove Theorem \ref{GE1}.
There are two parts to this theorem: long time existence ($T = \infty$) and convergence
($\gamma(\S^1)$ approaches a round circle exponentially fast).
Given Proposition \ref{EE1} and the blowup criterion from \cite{DKS02}, it is rather straightforward to conclude the first
part of Theorem \ref{GE1}.

\begin{cor}
\label{LTE}
Suppose $\gamma:\S^1\times[0,T)\rightarrow\R^2$ solves \eqref{CD} and satisfies the assumptions of Theorem \ref{GE1}.  Then $T = \infty$.
\end{cor}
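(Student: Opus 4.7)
My plan is to combine the uniform control on $\Ko$ supplied by Proposition \ref{EE1} with the standard blowup criterion behind the local existence theorem to rule out finite-time singularities.

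First I would translate the a priori bound $\Ko(t) \le 2K^*$ into a uniform bound on $\vn{k(\cdot,t)}_2^2$. By Lemma \ref{LA}, $L$ is non-increasing and $A$ is conserved, so under the hypothesis $A(\gamma_0) > 0$ the isoperimetric inequality pinches
\[
\sqrt{4\pi A(0)} \le L(t) \le L(0)
\]
for all $t \in [0,T)$. By \eqref{Ewind1} and Lemma \ref{WN}, $\kav(t) = 2\pi/L(t)$ is uniformly bounded above and below, and $\vn{k-\kav}_2^2 = \Ko(t)/L(t) \le 2K^*/\sqrt{4\pi A(0)}$. Hence
\[
\vn{k(\cdot,t)}_2^2 \le 2\vn{k-\kav}_2^2 + 2\kav^2 L \le C_0
\]
for some constant $C_0 = C_0(K^*,A(0),L(0))$ independent of $t \in [0,T)$.

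Next I would argue by contradiction and assume $T < \infty$. The idea is to restart the local existence result (Theorem \ref{STE}, as adapted from \cite{DKS02}) at a time $t_* < T$ sufficiently close to $T$. The key observation is that the lifespan produced by this theorem depends only on a uniform bound on $\vn{k}_2^2$ (and on the length), and not on higher-order norms of the initial datum. Since we have such a bound $C_0$ uniformly on $[0,T)$, there is a fixed $\tau = \tau(C_0, L(0), A(0)) > 0$ such that, taking $t_* \in (T-\tau, T)$ as initial data, the restarted flow exists on $[t_*, t_*+\tau]$; by uniqueness it agrees with $\gamma$ on $[t_*, T)$ and extends it beyond $T$, contradicting maximality. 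Hence $T = \infty$.

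The step I expect to require the most care is verifying that the lifespan in the restart argument truly depends only on $\vn{k}_2^2$ and on the length bounds. This is a no-concentration formulation standard in the style of \cite{DKS02}: with $\vn{k}_2^2 \le C_0$ and the uniform lower bound on $L$, one covers $\gamma(\cdot,t_*)$ by finitely many small balls on each of which $\int k^2\, ds < \varepsilon_0$ for the concentration threshold $\varepsilon_0$ appearing in the local theorem, and the quantitative small-curvature local existence then yields a lifespan independent of $t_*$. Everything else is bookkeeping.
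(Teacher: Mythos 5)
Your argument is essentially the same as the paper's, just phrased as the contrapositive. The paper cites the blowup criterion from \cite[Theorem~3.1]{DKS02} directly: if $T<\infty$ then $\vn{k}_2^2\to\infty$, which via the identity $\Ko = L\vn{k}_2^2 - 4\pi^2$ (and the area/length bounds) forces $\Ko\to\infty$, contradicting Proposition~\ref{EE1}. You instead derive the uniform bound $\vn{k}_2^2\le C_0$ first and run a restart/uniqueness argument to extend past a hypothetical finite $T$; that restart is precisely what the DKS02 blowup criterion encodes, so the substance is identical. Two minor remarks: the factor of $2$ in $\vn{k}_2^2 \le 2\vn{k-\kav}_2^2 + 2\kav^2 L$ is unnecessary since $\int(k-\kav)\,ds=0$ gives the exact identity $\vn{k}_2^2 = \vn{k-\kav}_2^2 + \kav^2 L$ (which the paper uses); and your appeal to ``a lifespan depending only on $\vn{k}_2^2$ and the length'' is a quantitative strengthening of Theorem~\ref{STE} as stated, so it is cleaner to cite the blowup form of \cite[Theorem~3.1]{DKS02} as the paper does rather than re-derive it via no-concentration. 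Neither point is a gap; the proof is correct.
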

\begin{proof}
Suppose $T<\infty$.  Then by \cite[Theorem 3.1]{DKS02}, $\vn{k}_2^2\rightarrow\infty$ as $t\rightarrow T$.  However
\[
\Ko
 = L\int_{\gamma}(k-\kav)^2ds
 = L\vn{k}_2^2 - L^2\kav^2
 = L\vn{k}_2^2 - 4\pi^2,
\]
and since $2\sqrt{\pi A(\gamma_0)} \le L \le L(\gamma_0)$, this means $\Ko\rightarrow\infty$ as $t\rightarrow T$.  This is in direct contradiction with Proposition \ref{EE1}.
\end{proof}

It remains to classify the limit.
First observe that if we can show $\Ko\rightarrow0$ then we will obtain the desired convergence result (in a weaker topology), as curves with constant curvature in the plane are either circles or straight lines.
Lemma \ref{FO} and Corollary \ref{LTE} imply $\Ko\in L^1([0,\infty))$, and since $\Ko\ge 0$, to conclude this a sufficient
condition is that $\Ko'$ is uniformly bounded.

\begin{prop}\label{DE1}
Suppose $\gamma:\S^1\times[0,T)\rightarrow\R^2$ solves \eqref{CD} and satisfies the assumptions of Theorem \ref{GE1}. Then there exists a constant $c_1\in[1,\infty)$ depending only on $\gamma_0$ such that
\[
\vn{k_s}_2^2 \le c_1.
\]
\end{prop}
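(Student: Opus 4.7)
The plan is to derive a differential inequality for $\vn{k_s}_2^2$ whose right-hand side, while not manifestly bounded pointwise, is integrable in time. First, using the commutator identity \eqref{COM} and the curvature evolution \eqref{EK}, one computes
\[
\partial_t k_s = -k_{sssss} - 3 k k_s k_{ss} - k^2 k_{sss} + (\partial_t^\top\!\gamma) k_{ss}.
\]
Applying Lemma \ref{DI} with $f = k_s^2$, the tangential-velocity contributions cancel, and after several integrations by parts (using periodicity) one arrives at the key identity
\[
\rd{}{t}\vn{k_s}_2^2 + 2\vn{k_{sss}}_2^2 = \tfrac{1}{3}\int_\gamma k_s^4\,ds + 2\int_\gamma k^2 k_{ss}^2\,ds.
\]

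Second, I would estimate the right-hand side using Corollary \ref{LU} (applicable since $k_s$, $k_{ss}$ and $k-\kav$ all have zero mean on the closed curve) together with the integration-by-parts identities $\vn{k_s}_2^2 = -\int_\gamma (k-\kav) k_{ss}\,ds$ and $\vn{k_{ss}}_2^2 = -\int_\gamma k_s k_{sss}\,ds$. Iterating Cauchy–Schwarz on these yields $\vn{k_{ss}}_2^3 \le \vn{k-\kav}_2 \vn{k_{sss}}_2^2$, so in combination with the a priori control $\vn{k-\kav}_2^2 = \Ko/L \le 2K^*/L$ from Proposition \ref{EE1} we get $\int k_s^4 \le (K^*/\pi)\vn{k_{sss}}_2^2$ and $\int (k-\kav)^2 k_{ss}^2 \le (K^*/\pi) \vn{k_{sss}}_2^2$. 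Splitting $k^2 \le 2(k-\kav)^2 + 2\kav^2$ and bounding the constant part by $\kav^2 \le 4\pi^2/L_{\min}^2$ (with $L_{\min} = 2\sqrt{\pi A(\gamma_0)}$ from the isoperimetric inequality and Lemma \ref{LA}), the upshot is
\[
\rd{}{t}\vn{k_s}_2^2 + \Big(2 - \tfrac{13 K^*}{3\pi}\Big)\vn{k_{sss}}_2^2 \le \tfrac{16\pi^2}{L_{\min}^2}\vn{k_{ss}}_2^2.
\]
Since $K^* \simeq 1/18 < 3\pi/13$, the coefficient of $\vn{k_{sss}}_2^2$ is strictly positive and may be discarded.

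Third, to convert the above into a pointwise bound I would revisit the proof of Proposition \ref{DE} and retain the positive term $c_0 L \vn{k_{ss}}_2^2$ (with $c_0 = 2 - 3\Ko/(2\pi) - 6\omega\sqrt{\Ko} > 0$) which was dropped there. Integrating that sharper inequality yields
\[
c_0 L_{\min}\int_0^t \vn{k_{ss}}_2^2 \, d\tau \le \Ko(0) + 8\pi^2\log\big(L(0)/L_{\min}\big),
\]
uniformly in $t$. Integrating the differential inequality above then gives
\[
\vn{k_s(\cdot,t)}_2^2 \le \vn{k_s(\cdot,0)}_2^2 + \tfrac{16\pi^2}{L_{\min}^2}\int_0^\infty \vn{k_{ss}}_2^2\,d\tau =: c_1,
\]
a constant depending only on $\gamma_0$.

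The main obstacle is a subtle borderline: the standard strategy of absorbing every cubic/quartic term into the dissipation $\vn{k_{sss}}_2^2$ fails for the piece $\kav^2 \int k_{ss}^2\,ds$, since $\kav L = 2\pi$ makes the Wirtinger inequality for $k_{ss}$ saturate exactly ($\kav^2 L^2/4\pi^2 = 1$). The fix is to recognise that $\vn{k_{ss}}_2^2$ itself is already integrable in time, so this term can simply be carried to the right-hand side rather than absorbed.
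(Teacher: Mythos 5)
Your proposal is correct, and it starts from the same place as the paper (applying Lemma~\ref{DI} with $f=k_s^2$, using the commutator \eqref{COM} and \eqref{EK} to get the differential identity, which you simplify cleanly using $\int_\gamma k_s^4\,ds = -3\int_\gamma kk_s^2k_{ss}\,ds$; your simplified form $\tfrac{1}{3}\int k_s^4 + 2\int k^2k_{ss}^2$ agrees with the paper's \eqref{AR1} after substituting that identity). From there, however, the two arguments diverge in how they handle the borderline term. You observe, correctly, that after splitting $k^2$ the piece $\kav^2\int k_{ss}^2\,ds$ saturates Wirtinger exactly (since $\kav L=2\pi$) and cannot be absorbed into $2\vn{k_{sss}}_2^2$ by Wirtinger alone; your fix is to carry it to the right as $\kav^2\vn{k_{ss}}_2^2$ and prove $\vn{k_{ss}}_2^2\in L^1([0,\infty))$ by revisiting Proposition~\ref{DE} and retaining the dissipation term $c_0L\vn{k_{ss}}_2^2$, then integrate directly. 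The paper instead \emph{does} absorb the borderline piece into the dissipation: it writes $\kav^2\vn{k_{ss}}_2^2\le\kav^2\vn{k_s}_2\vn{k_{sss}}_2$, applies Young with a small $\varepsilon$ to push the $\vn{k_{sss}}_2^2$ coefficient down to $1/54$, and then iterates (bounding $\vn{k_s}_2^2\le\frac{\sqrt{L\Ko}}{2\pi}\vn{k_{sss}}_2$ and Young-ing again) so that the residual term is $O(\kav^8L\Ko)$. Since $\Ko\in L^1$ by Lemma~\ref{FO}, this is directly integrable, and the paper completes the argument with a Gronwall-type integration on intervals where $\vn{k_s}_2^2>1$, getting exponential decay there as a by-product. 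So the two routes differ genuinely: you trade the paper's second Young-with-small-$\varepsilon$ and the $\vn{k_s}_2^2>1$ case split for a second look at the proof of Proposition~\ref{DE}; your version is arguably more elementary and avoids the case split, while the paper's is self-contained within Lemma~\ref{FO} and also yields exponential decay of $\vn{k_s}_2^2$ on the bad set. Two minor points to tidy up in your write-up: (i) $c_0$ depends on $\Ko(t)$, so you should replace it by the uniform lower bound $c_0^{\min}=2-\tfrac{3}{2\pi}\cdot\tfrac{3K^*}{2}-6\sqrt{\tfrac{3K^*}{2}}>0$, which is positive because the proof of Proposition~\ref{EE1} actually gives $\Ko<\tfrac{3K^*}{2}$ (strictly), not just $\Ko\le 2K^*$; (ii) the statement asks for $c_1\in[1,\infty)$, so you should take $c_1=\max\bigl\{1,\vn{k_s(\cdot,0)}_2^2+\tfrac{16\pi^2}{L_{\min}^2}\int_0^\infty\vn{k_{ss}}_2^2\,d\tau\bigr\}$.
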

\begin{proof}
Let us assume there exist $\delta_0 \ge 0, \delta_0 < \delta_1$ such that $\vn{k_s}_2^2 > 1$ for $t\in[\delta_0,\delta_1]$. (Note that $\delta_0=0$ is allowed.)
Outside of such intervals we may take $c_1=1$.
We apply Lemma \ref{DI} to $k_s^2$ to obtain
\[
\rd{}{t}\int_\gamma k_s^2ds
 = \int_\gamma (k_s^2)_t + kk_{ss}k_s^2 - (k_s^2)_s(\partial_t^\top\!\gamma) ds.
\]
Now the interchange formula \eqref{COM} and the evolution of the curvature \eqref{EK} imply
\begin{align*}
k_{st}
 &= k_{ts} - kk_{ss}k_s - (\partial_t^\top\!\gamma)_sk_s
\\
 &= (-k_{ssss} - k^2k_{ss} + (\partial_t^\top\!\gamma)k_s)_s - kk_{ss}k_s - (\partial_t^\top\!\gamma)_sk_s
\\
 &= -k_{sssss} - 2kk_sk_{ss} - k^2k_{sss} + (\partial_t^\top\!\gamma)_sk_s + (\partial_t^\top\!\gamma)k_{ss} - kk_{ss}k_s - (\partial_t^\top\!\gamma)_sk_s
\\
 &= -k_{sssss} - 2kk_sk_{ss} - k^2k_{sss} + (\partial_t^\top\!\gamma)k_{ss} - kk_{ss}k_s.
\end{align*}
Therefore
\begin{align}
\rd{}{t}\int_\gamma k_s^2ds
 &= \int_\gamma (k_s^2)_t + kk_{ss}k_s^2 ds
\notag\\
 &= -2\int_\gamma k_sk_{sssss} + 2kk_s^2k_{ss} + k^2k_sk_{sss} ds - \int_\gamma kk_s^2k_{ss} ds
\notag\\
 &= -2\vn{k_{sss}}_2^2 + \frac{5}{3}\int_\gamma k_s^4 ds - 2\int_\gamma k^2k_sk_{sss}  ds
\notag\\
\label{AR1}
 &= -2\vn{k_{sss}}_2^2 + \frac{5}{3}\int_\gamma k_s^4 ds + 2\int_\gamma k^2k_{ss}^2ds + 4\int_\gamma kk_s^2k_{ss}ds.
\end{align}
Since
\[
\int_\gamma k_s^4 ds = -3\int_\gamma kk_s^2k_{ss}ds
 \le \frac{1}{2}\int_\gamma k_s^4 ds + \frac{9}{2}\int_\gamma k^2k_{ss}^2ds
\]
we have
\[
 \frac{5}{3}\int_\gamma k_s^4 ds + 4\int_\gamma kk_s^2k_{ss}ds
\le
 27\int_\gamma k^2k_{ss}^2ds.
\]
Combining this with \eqref{AR1} gives
\begin{equation*}
\rd{}{t}\int_\gamma k_s^2ds
 \le -2\vn{k_{sss}}_2^2 + 27\int_\gamma k^2k_{ss}^2 ds.
\end{equation*}
Rewriting the second term, integrating by parts, and using Lemma \ref{WI} we have
\begin{align*}
\int_\gamma k^2k_{ss}^2 ds
 &= \int_\gamma (k-\kav)^2k_{ss}^2ds + 2\kav\int_\gamma kk_{ss}ds - \kav^2\int_\gamma k_{ss}^2 ds
\\ &\le 2\int_\gamma (k-\kav)^2k_{ss}^2ds + 2\kav^2\int_\gamma k_{ss}^2 ds
\\ &\le \frac{\Ko}{\pi}\vn{k_{sss}}_2^2 + 2\kav^2\vn{k_s}_2\vn{k_{sss}}_2
\\ &\le \frac{\Ko}{\pi}\vn{k_{sss}}_2^2  + \frac{1}{54}\vn{k_{sss}}_2^2 + 54\kav^4\vn{k_{s}}_2^2
\\ &\le \frac{\Ko}{\pi}\vn{k_{sss}}_2^2  + \frac{1}{54}\vn{k_{sss}}_2^2 + \frac{27\kav^4\sqrt{L\Ko}}{\pi}\vn{k_{sss}}_2
\\ &\le \vn{k_{sss}}_2^2\Big(\frac{1}{27} +\frac{\Ko}{\pi}\Big) + \frac{27^3\kav^8L\Ko}{2\pi^2}.
\end{align*}
Noting that $\Ko < \frac{\pi}{27}$ by Proposition \ref{EE1} (in fact $\Ko \lesssim 0.106$) we find
\begin{align*}
\rd{}{t}\int_\gamma k_s^2ds
 + \vn{k_{sss}}_2^2
 \le \frac{27^3\kav^8L\Ko}{2\pi^2}.
\end{align*}
Observe that Lemma \ref{FO}, the isoperimetric inequality, and $\vn{k_s}_2^2 \ge 1$ imply
\[
\int_{\delta_0}^t\frac{27^3\kav^8L\Ko}{2\pi^2\vn{k_s}_2^2}
 \le \int_{\delta_0}^t\frac{27^3\kav^8L\Ko}{2\pi^2}
 \le c\vn{\Ko}_1 \le c,
\]
where $c=c(\gamma_0)$ is a universal constant.
Lemmas \ref{WI}, \ref{LA} and Proposition \ref{EE1} combined with the above and integrating now gives
\begin{align*}
\int_\gamma k_s^2ds
 \le ce^{-\frac{16\pi^4}{L^4(0)}(t-\delta_0)},
\end{align*}
where $t\in[\delta_0,\delta_1]$.
\end{proof}

As Proposition \ref{DE1} gives us good control over a quantity so long as it is larger than one, we name it a \emph{dissipation} estimate.
With this in hand, convergence to a round circle now follows quite easily.

\begin{cor}\label{CoC}
Suppose $\gamma:\S^1\times[0,T)\rightarrow\R^2$ solves \eqref{CD} and satisfies the assumptions of
Theorem \ref{GE1}.  Then $\gamma(\S^1)$ approaches a round circle with radius $\sqrt{\frac{A(0)}{\pi}}$.
\end{cor}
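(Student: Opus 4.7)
The strategy is to upgrade the $L^1([0,\infty))$ integrability of $\Ko$ (Lemma \ref{FO} together with Corollary \ref{LTE}) to pointwise decay $\Ko(t)\to 0$; this will force the curve to flatten into a constant-curvature limit, and area conservation will then pin down the radius. The key step is to produce a uniform upper bound on $\rd{}{t}\Ko$. Starting from Lemma \ref{KoE}, I would discard the two non-positive terms $-\Ko\vn{k_s}_2^2/L$ and $-2L\vn{k_{ss}}_2^2$ and bound each of the three remaining terms using only ingredients already available: Proposition \ref{DE1} gives $\vn{k_s}_2^2\le c_1$; Lemma \ref{LA} together with the isoperimetric inequality gives $2\sqrt{\pi A(0)}\le L\le L(\gamma_0)$ and hence $|\kav|\le \sqrt{\pi/A(0)}$; and Corollary \ref{LU} applied to $f=k-\kav$ yields $\vn{k-\kav}_\infty^2 \le \frac{L}{2\pi}\vn{k_s}_2^2$. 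Combined, these produce an estimate of the shape $\rd{}{t}\Ko \le C_0$ for a constant $C_0 = C_0(\gamma_0)$.

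With this one-sided derivative bound a Barbalat-type argument then yields $\Ko(t)\to 0$: if $\Ko(t_n)\ge\varepsilon$ along some $t_n\to\infty$, the upper bound on $\Ko'$ would force $\Ko\ge\varepsilon/2$ on disjoint intervals of length $\varepsilon/(2C_0)$ ending at each $t_n$, contradicting $\Ko\in L^1([0,\infty))$. Since $L\ge 2\sqrt{\pi A(0)}$ is bounded below, vanishing of $\Ko$ gives $\vn{k-\kav}_2\to 0$; the interpolation $\vn{k-\kav}_\infty^2 \le \vn{k-\kav}_2\vn{k_s}_2$ extracted from the proof of Corollary \ref{LU}, combined with $\vn{k_s}_2\le\sqrt{c_1}$, then promotes this to uniform convergence $\vn{k(\cdot,t)-\kav(t)}_\infty \to 0$.

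Finally, to identify the limit, note that $\vn{k}_\infty$ and $L$ are uniformly bounded, so after subtracting a translation (required since translations are a symmetry of \eqref{CD}) the arc-length parametrisations form a $C^2$-bounded family. Arzel\`a--Ascoli extracts a $C^{1,\alpha}$-convergent subsequence whose limit $\gamma_\infty$ has constant curvature (by the uniform collapse of $k-\kav$) and winding number one, and therefore is a simple round circle. Since area is conserved along the flow, $A(\gamma_\infty)=A(0)$, forcing the radius to be $\sqrt{A(0)/\pi}$ and the limiting circle to be unique up to translation; all subsequential limits therefore coincide in shape, and $\gamma(\S^1,t)$ approaches the asserted circle. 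I expect the upper bound on $\rd{}{t}\Ko$ in the first step to be the main technical content, since it must be obtained without any higher-derivative control of curvature beyond what Proposition \ref{DE1} supplies; the remaining steps are routine once that ingredient is in place.
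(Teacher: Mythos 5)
Your proof follows essentially the same route as the paper. Both rest on exactly one pillar: combine $\Ko\in L^1([0,\infty))$ (Lemma \ref{FO} plus Corollary \ref{LTE}) with a bound on the time derivative of $\Ko$ coming from Lemma \ref{KoE}, Proposition \ref{DE1}, Corollary \ref{LU} and the isoperimetric bounds on $L$ and $\kav$, in order to conclude $\Ko\to 0$; then use the constancy of the winding number and conservation of the enclosed area to identify the limit as a round circle of radius $\sqrt{A(0)/\pi}$. The paper is terser on the last two stages (it simply says one may ``directly conclude $\Ko\to 0$ and obtain the desired statement''), whereas you flesh out the Barbalat-type argument and the Arzel\`a--Ascoli compactness step; none of that changes the logical content. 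One small technical remark in your favour: the paper states a bound on $|\Ko'|$, but the dissipation term $2L\vn{k_{ss}}_2^2$ is not yet controlled at that stage (Proposition \ref{DE2} comes later), so strictly speaking only the one-sided bound $\Ko'\le C_0$ is available there -- which, as you correctly observe, is all the Barbalat argument requires.
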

\begin{proof}
Corollary \ref{LTE} implies $T=\infty$, and Lemma \ref{FO} gives $\Ko\in L^1([0,\infty))$, so if $\Ko'$ is bounded, we shall be able to directly conclude $\Ko\rightarrow0$ and obtain the desired statement.
Recall Lemma \ref{KoE} and estimate
\begin{align*}
\left|\rd{}{t}\Ko\right| 
 &\le 3L\int_{\gamma} (k-\kav)^2k^2_s ds
   + 6\kav L \int_{\gamma} |k-\kav|k_s^2 ds
   + 2\kav^2 L \vn{k_s}^2_2
\\
 &\le \frac{3L^2(0)}{2\pi}\vn{k_s}_2^4
   + 12\omega\pi\sqrt{\frac{L(0)}{2\pi}}\vn{k_s}_2^3
   + 6\omega\pi\sqrt{\omega\pi A(0)} \vn{k_s}^2_2
\\
 &\le c(\omega,L(0),A(0))(c_1 + c_1^2),
\end{align*}
where we used Proposition \ref{DE1}.
This finishes the proof.
\end{proof}

Combining the convergence result with a short computation allows us to estimate the measure of the set of times during which the curvature is not strictly positive.

\begin{proof}[Proof of Proposition \ref{PwtE}]
Rearranging $\gamma$ in time if necessary, we may assume that
\begin{align*}
k(\cdot,t) \not> 0 ,\qquad &\text{ for all }t\in[0,t_0)\\
k(\cdot,t) > 0,    \qquad &\text{ for all }t\in[t_0,\infty)
\end{align*}
where $t_0 > \Big(\frac{L(0)}{2\pi}\Big)^4-\Big(\frac{A(0)}{\pi}\Big)^2$, otherwise we have nothing to prove.  However in this case we have
\begin{align*}
\rd{}{t}L &= -\vn{k_s}_2^2 \le -\frac{\pi^2}{L^2}\vn{k}_2^2 \le -\frac{4\pi^4}{L^3},&\text{for}\ t\in[0,t_0),
\intertext{where we used the fact that $\gamma$ is closed and that the curvature has a zero.  This implies}
L^4(t) &\le -16t\pi^4 + L^4(0),& \text{for}\ t\in[0,t_0),
\end{align*}
and thus $L^4(t_0) < 16\pi^2A^2(0)$.
This is in contradiction with the isoperimetric inequality.
\end{proof}

In the spirit of classical interpolation, we now obtain exponential decay of the $L^2$ norm of $k_{ss}$.

\begin{prop}\label{DE2}
Suppose $\gamma:\S^1\times[0,T)\rightarrow\R^2$ solves \eqref{CD} and satisfies the assumptions of Theorem \ref{GE1}. Then there exists a constant $c_2\in[1,\infty)$ depending only on $\gamma_0$ such that
\[
\vn{k_{ss}}_2^2 \le c_2e^{-\frac{4\pi^4}{L^4(0)}t}.
\]
\end{prop}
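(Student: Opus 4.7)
The plan is to derive a fourth-order dissipation inequality for $\vn{k_{ss}}_2^2$ in the style of Proposition \ref{DE1}, and then convert it into exponential decay via the Poincar\'e--Wirtinger inequality.  Starting from Lemma \ref{DI} applied to $f=k_{ss}^2$, the task reduces to computing $k_{sst}$, which I would obtain by differentiating the formula for $k_{st}$ (derived inside the proof of Proposition \ref{DE1}) one further time in arclength and re-commuting $\partial_t$ with $\partial_s$ via \eqref{COM}.  After integration by parts and exploiting periodicity to eliminate the tangential-velocity contributions, the resulting identity takes the schematic form
\[
\rd{}{t}\vn{k_{ss}}_2^2 = -2\vn{k_{ssss}}_2^2 + P\bigl(k,k_s,k_{ss},k_{sss}\bigr),
\]
where $P$ collects scaling-homogeneous polynomial terms such as $\int_\gamma k^2k_{sss}^2\,ds$, $\int_\gamma kk_sk_{ss}k_{sss}\,ds$, and quartic-in-$k_s$ pieces.

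The main obstacle is absorbing $P$ into a small fraction of the leading dissipation $-2\vn{k_{ssss}}_2^2$.  Following the strategy used inside Proposition \ref{DE1}, I would split $k=(k-\kav)+\kav$ in every factor: the oscillation is bounded in $L^\infty$ via Corollary \ref{LU}, giving $\vn{k-\kav}_\infty^2\le \Ko/(2\pi)$, which is small by Proposition \ref{EE1}; the mean curvature $\kav$ is handled by Lemma \ref{WN}; and the residual $L^2$-norms of $k_s$, $k_{ss}$, $k_{sss}$ (each of zero mean by periodicity) are iteratively reduced to $\vn{k_{ssss}}_2$ via Lemma \ref{WI}.  The trickiest terms are the mixed-order pieces such as $\int_\gamma \kav\, k_sk_{ss}k_{sss}\,ds$: there I would use Cauchy--Schwarz together with a Peter--Paul inequality to peel off a small slice of $\vn{k_{ssss}}_2^2$, and absorb the remaining factors using the uniform bound $\vn{k_s}_2^2\le c_1$ from Proposition \ref{DE1}.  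The net outcome should be
\[
\rd{}{t}\vn{k_{ss}}_2^2 + \tfrac14\vn{k_{ssss}}_2^2 \le g(t),
\]
where $g$ depends polynomially on $\vn{k_s}_2$ and $\Ko$ and is in particular dominated by $c(\gamma_0)\vn{k_s}_2^2$.

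To close the argument I would apply Lemma \ref{WI} twice in succession to obtain the bound $\vn{k_{ssss}}_2^2 \ge (16\pi^4/L^4)\vn{k_{ss}}_2^2$, use $L(t)\le L(0)$ from Lemma \ref{LA}, and conclude
\[
\rd{}{t}\vn{k_{ss}}_2^2 + \frac{4\pi^4}{L^4(0)}\vn{k_{ss}}_2^2 \le g(t).
\]
Since $g$ inherits the exponential decay of $\vn{k_s}_2^2$ at the strictly faster rate $16\pi^4/L^4(0)$ established inside the proof of Proposition \ref{DE1}, a standard Gronwall comparison yields $\vn{k_{ss}}_2^2\le c_2 e^{-4\pi^4 t/L^4(0)}$ with $c_2$ depending only on $\gamma_0$.
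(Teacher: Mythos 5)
Your general plan (differentiate $\int_\gamma k_{ss}^2\,ds$, integrate by parts, absorb lower-order terms into a slice of the top-order dissipation, finish with Wirtinger and Gronwall) matches the paper's in spirit, but there are two substantive deviations, one of which is a genuine gap.

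First, a structural point: once the integrations by parts are carried out, the evolution identity for $\int_\gamma k_{ss}^2\,ds$ collapses to only three terms, namely
\[
\rd{}{t}\int_\gamma k_{ss}^2\,ds = -2\int_\gamma k_{ssss}^2\,ds + 3\int_\gamma k^2k_{sss}^2\,ds + 2\int_\gamma k^2k_{ss}k_{ssss}\,ds.
\]
The quartic-in-$k_s$ pieces and mixed terms such as $\int_\gamma \kav\,k_sk_{ss}k_{sss}\,ds$ that you anticipate do not survive; they cancel. Consequently the paper does not need the splitting $k=(k-\kav)+\kav$ on each factor. It instead bounds $\vn{k}_\infty$ uniformly (after a fixed time translation, using Corollary \ref{CoC} and Lemma \ref{WN}) and converts the surviving $\int_\gamma k^2 k_{ss}^2\,ds$-type remainder into $\frac{c}{L}\Ko$ via Corollary \ref{LU}, arriving at
\[
\rd{}{t}\int_\gamma k_{ss}^2\,ds \le -\frac{4\pi^4}{L^4(0)}\int_\gamma k_{ss}^2\,ds + c\,\Ko .
\]

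Second, and this is the real gap: your Gronwall closure relies on the claim that $\vn{k_s}_2^2$ decays exponentially at rate $16\pi^4/L^4(0)$, ``established inside the proof of Proposition \ref{DE1}.'' Proposition \ref{DE1} establishes no such thing. Its statement is only a uniform bound $\vn{k_s}_2^2\le c_1$, and the exponential estimate appearing inside its proof holds solely on the subintervals $[\delta_0,\delta_1]$ on which $\vn{k_s}_2^2>1$; once $\vn{k_s}_2^2$ drops below $1$ no decay of any kind is asserted. Feeding the uniform bound $\vn{k_s}_2^2\le c_1$ into $\rd{}{t}\vn{k_{ss}}_2^2 + \lambda\vn{k_{ss}}_2^2 \le c\vn{k_s}_2^2$ yields, via Gronwall, only $\vn{k_{ss}}_2^2 \le \vn{k_{ss}}_2^2(0)e^{-\lambda t} + c\,c_1/\lambda$, i.e.\ a uniform bound rather than exponential decay. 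So your argument does not close. The paper avoids this trap by making the inhomogeneity $c\,\Ko$ rather than $c\vn{k_s}_2^2$ and invoking $\Ko\in L^1\big([0,\infty)\big)$ from Lemma \ref{FO} and Corollary \ref{LTE}, which is the quantity that actually enjoys integrability along the whole flow.
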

\begin{proof}
We compute and estimate
\begin{align*}
\rd{}{t}\int_\gamma k_{ss}^2 ds
 &= -2\int_\gamma k_{ssss}^2 ds + 3\int_\gamma k^2k_{sss}^2ds + 2\int_\gamma k^2k_{ss}k_{ssss} ds
\\
 &\le -\int_\gamma k_{ssss}^2 ds + 3\vn{k}_\infty^2\int_\gamma k_{sss}^2ds + \vn{k}_\infty^4\int_\gamma k_{ss}^2 ds
\\
 &\le -\frac{1}{2}\int_\gamma k_{ssss}^2 ds + \frac{11}{4}\vn{k}_\infty^4\int_\gamma k_{ss}^2 ds
\\
 &\le -\frac{1}{4}\int_\gamma k_{ssss}^2 ds + \frac{121\vn{k}_\infty^8}{16L}\Ko.
\end{align*}
After a fixed time translation we have $\vn{k}_\infty^8 \le 2\Big(\frac{\omega\pi}{A(0)}\Big)^4 \le c$ and so
\begin{equation*}
\rd{}{t}\int_\gamma k_{ss}^2 ds
 \le -\frac{4\pi^4}{L^4(0)}\int_\gamma k_{ss}^2 ds
 +
 c\Ko,
\end{equation*}
where we also used Lemma \ref{WI}.
Since $\Ko\in L^1([0,\infty))$, we apply Gronwall's inequality to obtain
\begin{equation*}
\int_\gamma k_{ss}^2 ds
\le c_2e^{-\frac{4\pi^4}{L^4(0)}t},
\end{equation*}
as required.
\end{proof}

We finish by giving uniform estimates for the higher derivatives of curvature.  For this there are
two obvious approaches.  With the previous estimate in hand, we have in fact shown that
$\vn{k_s}_\infty \rightarrow 0$ along a subsequence of times $t_j\rightarrow\infty$.  This allows us
to employ classical methods to obtain the exponential convergence; see \cite{EG97} for example.

However, the desired decay estimates also follow by combining our work here with the powerful
interpolation inequalities in \cite{DKS02}.  Although not as transparent and elementary as our
analysis above, it is very efficient.

\begin{prop}\label{DE3}
Suppose $\gamma:\S^1\times[0,T)\rightarrow\R^2$ solves \eqref{CD} and satisfies the assumptions of
Theorem \ref{GE1}. Then for each $m\in\N$ there exist constants $c_m,c_m'\in(0,\infty)$ depending only on $\gamma_0$ such
that
\[
\vn{\partial^m_sk}_2^2 \le c_me^{-tc_m'},\quad\text{ and }\quad
\vn{\partial^{m}_sk}_\infty \le \sqrt{L(0)c_{m+1}}e^{-\frac{t}{2}c_{m+1}'}.
\]
\end{prop}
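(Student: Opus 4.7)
The plan is to argue by induction on $m$, bootstrapping from Propositions \ref{DE1} and \ref{DE2} via the DKS02-style interpolation machinery mentioned in the excerpt. The base cases $m=1,2$ should be immediate: Proposition \ref{DE2} supplies the $L^2$ decay at $m=2$, and since periodicity of $\gamma$ forces $\int_\gamma k_s\, ds=0$, Lemma \ref{WI} applied to $k_s$ yields $\vn{k_s}_2^2 \le (L^2(0)/4\pi^2)\vn{k_{ss}}_2^2$, upgrading Proposition \ref{DE1} to exponential decay at $m=1$. Applying Lemma \ref{WI} once more to $k-\kav$ (which also has zero mean by definition) shows that $\Ko$ itself decays exponentially; this will be the small parameter driving the induction.

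For the inductive step with the $L^2$ estimate known through order $m-1$, I would compute the evolution of $\vn{\partial_s^m k}_2^2$ using Lemma \ref{DI}, the commutator \eqref{COM} and the evolution equation \eqref{EK} of $k$, following the same pattern that produced Propositions \ref{DE1} and \ref{DE2}. Repeated integration by parts (exploiting periodicity) converts the $(\partial_s^m k)(\partial_s^m k)_t$ contribution into a leading dissipation $-2\vn{\partial_s^{m+2} k}_2^2$ plus a polynomial expression $P_m$ in $k,k_s,\ldots,\partial_s^{m+1}k$ whose scaling exactly matches that of the dissipation. Splitting $k = (k-\kav) + \kav$ throughout $P_m$, I would then invoke the multiplicative interpolation inequalities of Dziuk--Kuwert--Sch\"atzle \cite{DKS02}, which bound mixed $L^p$ norms of $\partial_s^i(k-\kav)$ ($i\ge 0$) by products of $\vn{k-\kav}_2$ and $\vn{\partial_s^{m+2}k}_2$ up to powers of $L$, together with a Young-type absorption of $\epsilon\vn{\partial_s^{m+2}k}_2^2$ into the dissipation. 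Noting that $\kav = 2\pi/L$ is uniformly bounded above and (after a fixed time translation) below by the isoperimetric inequality and Lemma \ref{LA}, this should yield
\[
\rd{}{t}\vn{\partial_s^m k}_2^2 + \vn{\partial_s^{m+2} k}_2^2 \le C_m\, e^{-\lambda_m t}
\]
with $C_m,\lambda_m>0$ depending only on $\gamma_0$. Iterated Wirtinger gives $\vn{\partial_s^{m+2}k}_2^2 \ge (2\pi/L(0))^4\vn{\partial_s^m k}_2^2$, and Gronwall's inequality closes the induction on the $L^2$ bound.

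The $L^\infty$ estimate is then automatic: since $\partial_s^m k$ has zero mean for $m\ge 1$ by periodicity, Corollary \ref{LU} combined with the $L^2$ estimate at level $m+1$ gives
\[
\vn{\partial_s^m k}_\infty^2 \le \frac{L(0)}{2\pi}\vn{\partial_s^{m+1} k}_2^2 \le \frac{L(0)}{2\pi}c_{m+1}e^{-tc_{m+1}'},
\]
which matches the stated form after absorbing the prefactor into the constants. The main obstacle is a bookkeeping one: to upgrade mere boundedness into \emph{exponential} decay, every surviving term in $P_m$ must — after the $k=(k-\kav)+\kav$ splitting — carry a genuine factor of $\vn{k-\kav}_2$ (rather than an undecaying power of $\kav$ alone). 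Arranging the DKS02 interpolation so that each summand retains such a factor, while keeping all constants dependent only on $\gamma_0$ and uniform in $m$, is the chief analytic difficulty.
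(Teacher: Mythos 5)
Your proposal follows the same broad strategy as the paper---establish a differential inequality for $\vn{\partial_s^m k}_2^2$, use a DKS02-style absorption of lower-order terms into the dissipation $\vn{\partial_s^{m+2}k}_2^2$, and close with Gronwall; the $L^\infty$ step via Corollary~\ref{LU} on the zero-mean function $\partial_s^m k$ is exactly how the paper finishes. The main structural difference is that the paper does not re-derive the evolution identity: it cites equation~(3.2) of \cite{DKS02} directly, in the form
$\rd{}{t}\vn{\partial_s^m k}_2^2 + \vn{\partial_s^{m+2}k}_2^2 \le c\vn{k}_2^{4m+10}$,
and then applies the additive interpolation $\vn{\partial_s^m k}_2^2 \le c(\vn{\partial_s^{m+2}k}_2^2 + \vn{k}_2^2)$.

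Your insistence on splitting $k = (k-\kav)+\kav$ and keeping a genuine $\vn{k-\kav}_2$ factor in every surviving term is in fact more careful than the paper on a substantive point. The paper's final differential inequality is $y' + cy \le c_0$ with $y = \vn{\partial_s^m k}_2^2$ and $c_0$ a time-independent positive constant (since $\vn{k}_2^2$ is bounded away from zero, not decaying); Gronwall then gives only $y(t)\le y(0)e^{-ct}+(c_0/c)(1-e^{-ct})$, i.e.\ uniform boundedness, not the claimed exponential decay. To actually obtain decay one needs a decaying quantity on the right-hand side, which is exactly what your $(k-\kav)$ bookkeeping secures (via $\Ko\to 0$, or equivalently $\vn{k_{ss}}_2\to 0$ from Proposition~\ref{DE2}). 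A terser alternative route, closer in spirit to the paper, would be to use the cited estimate to get uniform bounds on $\vn{\partial_s^M k}_2$ for all $M$, then apply a multiplicative Gagliardo--Nirenberg interpolation $\vn{\partial_s^m k}_2 \le c\,\vn{\partial_s^M k}_2^\theta\,\vn{k_{ss}}_2^{1-\theta}$ with $M>m\ge 2$ and feed in Proposition~\ref{DE2}. So the ``bookkeeping obstacle'' you flag is not a defect of your plan but the genuine content of the step; your plan, if carried through, is sound.

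Two minor remarks: (i) your base cases $m=1,2$ via Proposition~\ref{DE2} and one application of Wirtinger are correct and cleaner than attempting Gronwall from scratch there; (ii) you worry about keeping constants uniform in $m$, but the proposition permits $c_m, c_m'$ to depend on $m$, so this is not required.
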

\begin{proof}
Recall equation $(3.2)$ in \cite{DKS02}:
\begin{equation}
\rd{}{t}\int_\gamma (\partial^m_sk)^2 ds
 + \int_\gamma (\partial^{m+2}_sk)^2 ds
\le 
 c\vn{k}_2^{4m+10},
\label{EQksint}
\end{equation}
where $c$ is a constant depending only on $m$.
Noting that Proposition \ref{DE1} and Corollary \ref{CoC} imply $\vn{k}_2^2$ is uniformly bounded, we combine
\eqref{EQksint} with the simple interpolation inequality
\[
\vn{\partial^{m}_sk}^2_2
\le c(\vn{\partial^{m+2}_sk}^2_2 + \vn{k}^2_2)
\]
to obtain
\[
\rd{}{t}\int_\gamma (\partial^m_sk)^2 ds
 + c\int_\gamma (\partial^{m}_sk)^2 ds
\le 
 c_0,
\]
where $c$ and $c_0$ are absolute constants depending only on $m$ and $m$, $\gamma_0$ respectively.
Therefore
\[
\int_\gamma (\partial^m_sk)^2 ds
\le 
 c_0e^{-ct},
\]
and the $L^\infty$ estimates follow immediately.
\end{proof}

The proof of Theorem \ref{GE1} is now complete.

\section{Controlling density with $\Ko$}

We first treat a self-intersection as a singularity and `pull' information from it in a manner analogous to the proof of Simon's monotonicity formula \cite{S93}.  See also Theorem 6 in \cite{LY82} and
the appendix of \cite{KS04}.

\begin{lem}
\label{Lmult}
Suppose $\gamma:\S^1\rightarrow\R^2$ is a smooth immersed curve with $x\in\gamma(\S^1)$.
Then
\[
8|\gamma^{-1}(x)| = \int_\gamma (k^2 - k_0^2)|\gamma|ds
\]
where
\[
k_0 = 2\Big|\frac{\IP{\gamma}{\nu}}{|\gamma|^2} + \frac{k}{2} \Big|.
\]
\end{lem}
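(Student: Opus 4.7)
The plan is to translate so that $x=0$ (both sides of the identity are invariant under the substitution $\gamma\mapsto\gamma-x$), rewrite the integrand on the right-hand side as a total $s$-derivative of an explicit function which is smooth away from $\gamma^{-1}(0)$, and then count the jumps this function picks up at the finitely many points where $\gamma=0$. As a first algebraic step, expanding $k_0^2 = \bigl(k + 2\IP{\gamma}{\nu}/|\gamma|^2\bigr)^2$ and multiplying by $|\gamma|$ yields
\[
(k^2 - k_0^2)|\gamma| = -\frac{4k\IP{\gamma}{\nu}}{|\gamma|} - \frac{4\IP{\gamma}{\nu}^2}{|\gamma|^3},
\]
so the claim reduces to showing
\[
-\int_\gamma\!\Big(\frac{k\IP{\gamma}{\nu}}{|\gamma|} + \frac{\IP{\gamma}{\nu}^2}{|\gamma|^3}\Big) ds = 2|\gamma^{-1}(0)|.
\]

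Next I would produce the divergence identity
\[
\partial_s\Big(\frac{\IP{\gamma}{\tau}}{|\gamma|}\Big) = \frac{k\IP{\gamma}{\nu}}{|\gamma|} + \frac{\IP{\gamma}{\nu}^2}{|\gamma|^3},
\]
valid wherever $\gamma\ne 0$. The computation is direct: differentiate via $\tau_s = k\nu$, $\nu_s=-k\tau$ (which give $\partial_s\IP{\gamma}{\tau}=1+k\IP{\gamma}{\nu}$ and $\partial_s|\gamma|=\IP{\gamma}{\tau}/|\gamma|$), then use $|\gamma|^2 = \IP{\gamma}{\tau}^2+\IP{\gamma}{\nu}^2$ to eliminate the $\IP{\gamma}{\tau}^2$ term produced by the quotient rule; the stray $1/|\gamma|$ contributions cancel.

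The final and most delicate step is the integration around $\S^1$. Let $s_1,\dots,s_N$ denote the points with $\gamma(s_i)=0$, so $N=|\gamma^{-1}(0)|$. Taylor-expanding $\gamma(s) = (s-s_i)\tau(s_i) + \tfrac12 k(s_i)(s-s_i)^2\nu(s_i) + O((s-s_i)^3)$ at each $s_i$ establishes two facts simultaneously: $\IP{\gamma}{\nu} = O((s-s_i)^2)$, so the integrand in the reduced claim is integrable across each $s_i$; and $F(s) := \IP{\gamma}{\tau}/|\gamma|$ satisfies $F(s) = \mathrm{sgn}(s-s_i) + O(|s-s_i|)$, so $F$ extends continuously to each open arc between consecutive zeros but picks up a jump of $+2$ at each $s_i$. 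Partitioning $\S^1$ into the $N$ open arcs, applying the fundamental theorem of calculus on each, and telescoping the boundary terms cyclically around the circle gives $\int_{\S^1} F_s\, ds = -2N$, which combined with the previous two steps is exactly the identity required. The only real obstacle is this last step: one must verify integrability across each zero and keep the signs of the jumps straight, so that the telescoping indeed yields $-2N$ rather than $0$; both points are handled by the expansion above.
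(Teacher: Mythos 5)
Your proof is correct, and its algebraic core coincides with what the paper has hidden inside its integration-by-parts: away from $\gamma^{-1}(0)$, the quantity $F(s) = \IP{\gamma}{\tau}/|\gamma|$ is an antiderivative of $-\frac14(k^2-k_0^2)|\gamma|$. Where you differ from the paper is in how the singularity at the preimage points is handled. The paper follows the standard monotonicity-formula template (it explicitly cites Simon and Li--Yau for this lemma): introduce the cutoff test function $\eta = \min\{1/\varepsilon, 1/|\gamma|\}$, integrate $\int_\gamma \eta_s\IP{\gamma}{\gamma_s}\,ds$ by parts against $-\int_\gamma \eta\IP{\gamma}{\gamma_s}_s\,ds$, split the domain into $\{|\gamma|<\varepsilon\}$ and $\{|\gamma|\ge\varepsilon\}$, and recognise the surviving terms as the boundary flux $\frac{4}{\varepsilon}\IP{\gamma}{\gamma_s}\big|_{\partial\{|\gamma|<\varepsilon\}}$, which tends to $8|\gamma^{-1}(0)|$ as $\varepsilon\to0$ because $\gamma$ is $C^1$ at the origin. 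You bypass the cutoff entirely: you verify that the integrand is actually integrable across each $s_i$ (since $\IP{\gamma}{\nu}=O((s-s_i)^2)$, so $\IP{\gamma}{\nu}^2/|\gamma|^3 = O(|s-s_i|)$), note that $F$ has one-sided limits $\pm1$ at each $s_i$, and telescope the fundamental theorem of calculus around the $N$ open arcs to get $\int_{\S^1}F_s\,ds = -2N$. For a one-dimensional domain this is arguably the cleaner presentation and avoids the regularisation bookkeeping; the $\varepsilon$-cutoff technique the paper uses is the one that scales to the settings (higher-dimensional varifolds, etc.) where a pointwise antiderivative and Taylor expansion are unavailable. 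One small point worth recording explicitly: you need $\gamma^{-1}(0)$ to be finite, which follows because $\gamma$ is an immersion of the compact $\S^1$, so the zeros are isolated; the paper tacitly uses the same fact.
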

\begin{proof}
The right hand side of the equality above is translation invariant, so we may assume without loss of generality that $x=(0,0)$ is the origin.
Let $\varepsilon > 0$ and consider the test function $\eta:\S^1\rightarrow\R$ defined by
\[
\eta(s) = \min \Big\{ \frac{1}{\varepsilon}, \frac{1}{|\gamma(s)|} \Big\}.
\]
Note that
\[
\int_\gamma \eta_s^2 ds \le \int_\gamma \frac{1}{|\gamma|^4} ds \le \frac{L}{\varepsilon^4} < \infty,
\]
and so
\begin{equation}
\label{TmultE1}
\int_\gamma \eta_s\IP{\gamma}{\gamma_s} ds = - \int_\gamma \eta \IP{\gamma}{\gamma_s}_s ds.
\end{equation}
Computing, we have
\begin{align*}
\int_\gamma \eta_s\IP{\gamma}{\gamma_s} ds
 &= -\int_{\gamma^{-1}(\{|\gamma|\ge\varepsilon\})} \frac{\IP{\gamma}{\gamma_s}^2}{|\gamma|^3} ds
\intertext{and}
- \int_\gamma \eta \IP{\gamma}{\gamma_s} ds
 &= -\int_{\gamma^{-1}(\{|\gamma|<\varepsilon\})} \frac{1+k\IP{\gamma}{\nu}}{\varepsilon} ds
   -\int_{\gamma^{-1}(\{|\gamma|\ge\varepsilon\})} \frac{1+k\IP{\gamma}{\nu}}{|\gamma|} ds.
\end{align*}
Inserting these into \eqref{TmultE1} and simplifying gives
\begin{align*}
\frac{L(|\gamma|<\varepsilon)}{\varepsilon} &+ \frac{1}{\varepsilon}\int_{\gamma^{-1}(\{|\gamma|<\varepsilon\})} k\IP{\gamma}{\nu}ds
 = \int_{\gamma^{-1}(\{|\gamma|\ge\varepsilon\})} \frac{\IP{\gamma}{\gamma_s}^2}{|\gamma|^3} - \frac{1+k\IP{\gamma}{\nu}}{|\gamma|} ds
\\
 &= \int_{\gamma^{-1}(\{|\gamma|\ge\varepsilon\})} -\frac{\IP{\gamma}{\nu}^2}{|\gamma|^3} - k\frac{\IP{\gamma}{\nu}}{|\gamma|} ds
\\
&= - \int_{\gamma^{-1}(\{|\gamma|\ge\varepsilon\})} \bigg(\frac{\IP{\gamma}{\nu}}{|\gamma|^2} + \frac{k}{2}\bigg)^2|\gamma| ds
   + \frac{1}{4}\int_{\gamma^{-1}(\{|\gamma|\ge\varepsilon\})} k^2|\gamma| ds.
\end{align*}
Since $k\IP{\gamma}{\nu} = \IP{\gamma}{\gamma_{ss}}$ we have 
\[
\frac{L(|\gamma|<\varepsilon)}{\varepsilon} +
 \frac{1}{\varepsilon}\int_{\gamma^{-1}(\{|\gamma|<\varepsilon\})} k\IP{\gamma}{\nu}ds
= \frac{\IP{\gamma}{\gamma_s}}{\varepsilon}\Big|_{\partial(|\gamma|<\varepsilon)}.
\]
Thus
\begin{equation}
\label{TmultE2}
\frac{4\IP{\gamma}{\gamma_s}}{\varepsilon}\Big|_{\partial(|\gamma|<\varepsilon)}
 =   \int_{\gamma^{-1}(\{|\gamma|\ge\varepsilon\})} (k^2-k_0^2)|\gamma| ds.
\end{equation}
Noting that $\gamma$ is $C^1$ in $(0,0)$ and taking $\varepsilon\rightarrow 0$ finishes the proof.
\end{proof}

\begin{rmk}
The above proof requires only that $\gamma\in W^{2,2}$, as then (using Corollary \ref{LU} for example) $\gamma\in C^1$ and the $L^2$ norm of $k$ and $k_0$ is well-defined.
\end{rmk}

We shall also need the following well-known inequality for the $i$-th elementary symmetric functions
\[
\Pi_i(l_i,\ldots,l_n) = \sum_{1 \le k_1 < \cdots < k_i \le n} l_{k_1}\cdots l_{k_n}.
\]
\begin{lem}[Newton's inequality]
\label{Lnewt}
Let $l\in\R^n$ be a vector of positive real numbers.  Then
\[
\frac{\Pi_{i+1}(l)}{\Pi_{i+2}(l)} \ge \frac{\Pi_i(l)}{\Pi_{i+1}(l)}\frac{{n \choose i+1}^2}{{n \choose i}{n \choose i+2}}.
\]
\end{lem}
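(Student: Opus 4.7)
The plan is to recognize the claim as Newton's classical inequality for the normalized elementary symmetric means $p_k(l) = \Pi_k(l)/\binom{n}{k}$. In these terms, setting $k = i+1$, the statement is equivalent to
\[
p_k(l)^2 \;\ge\; p_{k-1}(l)\, p_{k+1}(l),
\]
and multiplying through by $\binom{n}{k}^2$ recovers the form as displayed in the lemma. So the whole task is to prove this version.

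To do so I would attach to $l\in(0,\infty)^n$ the real-rooted polynomial
\[
P(x) \;=\; \prod_{j=1}^n(x-l_j) \;=\; \sum_{k=0}^n(-1)^k\tbinom{n}{k}p_k(l)\, x^{n-k}.
\]
The structural observation driving the proof is that differentiation preserves the sequence of normalized coefficients: from $\binom{n}{k}(n-k)=n\binom{n-1}{k}$ one gets
\[
\tfrac{1}{n}P'(x) \;=\; \sum_{k=0}^{n-1}(-1)^k\tbinom{n-1}{k}p_k(l)\, x^{n-1-k},
\]
and Rolle's theorem guarantees that $P'$ still has only (positive) real roots. Iterating this operation $n-k-1$ times yields a polynomial $Q$ of degree $k+1$, all of whose roots remain positive and whose normalized coefficients are exactly $p_0(l),\ldots,p_{k+1}(l)$. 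The reversed polynomial $R(x) = x^{k+1}Q(1/x)$ has as roots the reciprocals of the roots of $Q$, so it too has only real roots; a direct computation singles out in $R^{(k-1)}(x)$ precisely the three coefficients $p_{k-1},p_k,p_{k+1}$, producing (up to an overall nonzero constant) the quadratic
\[
p_{k+1}(l)\,x^2 - 2p_k(l)\,x + p_{k-1}(l).
\]
By a further $k-1$ applications of Rolle's theorem this quadratic still has two real roots, so its discriminant is nonnegative, which reads precisely $p_k(l)^2 \ge p_{k-1}(l)\,p_{k+1}(l)$.

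The only delicate part is the bookkeeping of the combinatorial prefactors across the iterated differentiations and the reversal step; however both operations act linearly on the coefficient vector, and the identities $\binom{n}{k}(n-k)=n\binom{n-1}{k}$ together with $\binom{k+1}{m}=\binom{k+1}{k+1-m}$ keep the accounting essentially automatic, so I expect no genuine obstacle.
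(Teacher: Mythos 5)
The paper states this lemma as a well-known fact and does not prove it, so there is no in-text argument to compare against; you are supplying a proof the paper omits.

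Your argument is the classical and correct one. The reduction to the normalized form $p_k^2 \ge p_{k-1}p_{k+1}$ with $p_k = \Pi_k/\binom{n}{k}$ is exactly right, and the three-step mechanism — iterated differentiation of $P(x)=\prod_j (x-l_j)$ using the identity $(n-k)\binom{n}{k}=n\binom{n-1}{k}$ to truncate the $p$-sequence from the top, reversal $x^{k+1}Q(1/x)$ to flip it (legitimate because $Q(0)=\pm p_{k+1}\neq 0$ since all $l_j>0$), and $k-1$ further differentiations to isolate the quadratic $p_{k+1}x^2 - 2p_kx + p_{k-1}$ — produces a real-rooted quadratic whose nonnegative discriminant is precisely the inequality. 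Your accounting of the binomial coefficients checks out (after $k-1$ derivatives the three surviving terms each carry the common factor $(k+1)!/2$, which cancels). Two small points worth making explicit in a polished write-up: (i) the version of Rolle's theorem you invoke must be the one that counts multiplicities, so that $P$ having $n$ real roots with multiplicity implies $P'$ has $n-1$; and (ii) positivity of the roots is inherited at each stage because critical points of a real-rooted polynomial lie in the convex hull of its roots. Neither affects correctness — the proof is complete.
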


\begin{proof}[Proof of Theorem \ref{Tmult}]
Since $\gamma$ is compact, there exists an $x_0\in\R^2$ such that
\[
m(\gamma) = \sup_{x\in\R^2} |\gamma^{-1}(x)| = |\gamma^{-1}(x_0)|.
\]
Applying Lemma \ref{Lmult} in $x_0$ we have
\begin{equation}
\label{TmultE3}
8m = 8|\gamma^{-1}(x_0)| = \int_\gamma (k^2 - k_0^2)|\gamma|ds \le \int_\gamma k^2|\gamma|ds.
\end{equation}
We now decompose $\gamma$ into $m$ closed arcs $\gamma_i,\ldots,\gamma_m$, each smooth outside of the point $x_0$.
Let $l_i$ denote the length of the arc $\gamma_i$.
Our goal is to apply \eqref{TmultE3} to each arc $\gamma_i$, however these curves are not regular enough at $x_0$.
To ameliorate this point, consider an associated curve $\tilde\gamma_i:[0,l_i)\rightarrow\R^2$, also with length $l_i$, which
is without self-intersections, smooth outside $x_0$ and satisfies
\begin{align*}
\tilde\gamma_i(0) &= \gamma_i(0) = x_0
\\
\lim_{\varepsilon\searrow0} \partial_s\tilde\gamma_i(\varepsilon)
&= \lim_{\varepsilon\searrow0} \partial_s\gamma_i(\varepsilon)
\\
\int_{\tilde\gamma_i}\tilde k^2|\tilde\gamma|ds &\le 
\int_{\gamma_i}k^2|\gamma_i|ds.
\end{align*}
This is realised for example by reflecting $\gamma_i$ across the line
\[
x_0 + r\ \text{rot}_{\pi/2}\Big(\partial_s\gamma_i(0) + \partial_s\gamma_i(l_i)\Big),\qquad r\in\R.
\]
We now consider the extension $\tilde\gamma_i$ of each arc $\gamma_i:[0,l_i)\rightarrow\R^2$ defined by
\[
\hat\gamma_i(s) =
 \begin{cases}
   \gamma_i(s)&\text{ for }s\in [0,l_i),
\\
   \tilde\gamma_i(s-l_i)&\text{ for }s\in [l_i,2l_i).
 \end{cases}
\]
Note that $m(\hat\gamma_i) = 2$, and $|\hat\gamma_i| \le l_i/4$.  Applying \eqref{TmultE3} to $\hat\gamma_i$ we have
\[
8
 \le \int_{\hat\gamma_i} \hat k^2|\hat \gamma_i|ds
 \le 2\int_{\gamma_i} k^2|\gamma_i|ds
 \le \frac{l_i}{2}\int_{\gamma_i} k^2 ds.
\]
Therefore
\begin{equation}
\label{TmultE4}
\int_\gamma k^2 ds = \sum_{i=1}^m \int_{\gamma_i} k^2 ds \ge 16\sum_{i=1}^m \frac{1}{l_i}
 = 16\frac{\Pi_{m-1}(l_1,\ldots,l_m)}{\Pi_m(l_1,\ldots,l_m)}.
\end{equation}
Iterating Lemma \ref{Lnewt} $(m-1)$-times over gives the estimate 
\[
\frac{\Pi_{m-1}(l_1,\ldots,l_m)}{\Pi_m(l_1,\ldots,l_m)}
\ge
\frac{m^2}{\Pi_1(l_1,\ldots,l_m)}.
\]
Since $\Pi_1(l_1,\ldots,l_m) = L$, combining this estimate with \eqref{TmultE4} implies
\[
L\int_\gamma k^2 ds
 = \Ko - 4\pi^2\omega^2
 \ge 16m^2 - 4\pi^2\omega^2,
\]
as required.
\end{proof}

\bibliographystyle{plain}
\bibliography{curvediffusionfinal}

\begin{thebibliography}{10}

\bibitem{A93}
H.~Amann.
\newblock {Nonhomogeneous linear and quasilinear elliptic and parabolic
  boundary value problems}.
\newblock {\em Function Spaces, Differential Operators and Nonlinear Analysis},
  pages 9--126, 1993.

\bibitem{Abook}
H.~Amann.
\newblock {\em {Linear and quasilinear parabolic problems. Vol. 1: Abstract
  linear theory}}.
\newblock 1995.

\bibitem{A05}
H.~Amann.
\newblock {Quasilinear parabolic problems via maximal regularity}.
\newblock {\em Adv. Differential Equations}, 10(10):1081--1110, 2005.

\bibitem{A90}
S.B. Angenent.
\newblock {Nonlinear analytic semiflows}.
\newblock {\em Proc. Roy. Soc. Edinburgh. Sect. A}, 115(1-2):91--107, 1990.

\bibitem{A10smoothing}
T.~Asai.
\newblock On smoothing effect for higher order curvature flow equations.
\newblock {\em Adv. Math. Sci. Appl.}, 20(2):483, 2010.

\bibitem{BDR84}
P.~Baras, J.~Duchon, and R.~Robert.
\newblock {Evolution d'une interface par diffusion de surface}.
\newblock {\em Comm. Partial Differential Equations}, 9(4):313--335, 1984.

\bibitem{BBW98}
Andrew~J. Bernoff, Andrea~L. Bertozzi, and Thomas~P. Witelski.
\newblock Axisymmetric surface diffusion: Dynamics and stability of
  self-similar pinch-off.
\newblock Technical report, J. Statist. Phys, 1998.

\bibitem{B10}
S.~Blatt.
\newblock Loss of convexity and embeddedness for geometric evolution equations
  of higher order.
\newblock {\em Journal of Evolution Equations}, 10(1):21--27, 2010.

\bibitem{CTNc96}
J.W. Cahn, C.M. Elliott, and A.~Novick-Cohen.
\newblock {The Cahn--Hilliard equation with a concentration dependent mobility:
  motion by minus the Laplacian of the mean curvature}.
\newblock {\em European J. Appl. Math.}, 7(03):287--301, 1996.

\bibitem{C91}
P.T. Chru{\'s}ciel.
\newblock {Semi-global existence and convergence of solutions of the
  Robinson-Trautman (2-dimensional Calabi) equation}.
\newblock {\em Comm. Math. Phys.}, 137(2):289--313, 1991.

\bibitem{DKS02}
G.~Dziuk, E.~Kuwert, and R.~Sch{\"a}tzle.
\newblock {Evolution of elastic curves in $\R^n$: existence and computation}.
\newblock {\em SIAM J. Math. Anal.}, 33(5):1228--1245, 2002.

\bibitem{Ebook}
S.D. Eidel'man.
\newblock {\em {Parabolic systems}}.
\newblock 1969.

\bibitem{EZbook}
S.D. Eidel'man and N.V. Zhitarashu.
\newblock {\em {Parabolic boundary value problems}}.
\newblock 1998.

\bibitem{EG97}
C.M. Elliott and H.~Garcke.
\newblock {Existence results for diffusive surface motion laws}.
\newblock {\em Adv. Math. Sci. Appl.}, 7(1):467--490, 1997.

\bibitem{EM01losing}
C.M. Elliott and S.~Maier-Paape.
\newblock Losing a graph with surface diffusion.
\newblock {\em Hokkaido Math. J.}, 30:297--305, 2001.

\bibitem{EI05}
J.~Escher and K.~Ito.
\newblock {Some dynamic properties of volume preserving curvature driven
  flows}.
\newblock {\em Math. Ann.}, 333(1):213--230, 2005.

\bibitem{EMS98}
J.~Escher, U.F. Mayer, and G.~Simonett.
\newblock {The surface diffusion flow for immersed hypersurfaces}.
\newblock {\em SIAM J. Math. Anal.}, 29(6):1419--1433, 1998.

\bibitem{EM10surface}
J.~Escher and P.B. Mucha.
\newblock The surface diffusion flow on rough phase spaces.
\newblock {\em Discrete and Contin. Dyn. Syst.}, 26(2):431--453, 2010.

\bibitem{ES99}
J.~Escher and G.~Simonett.
\newblock {Moving surfaces and abstract parabolic evolution equations}.
\newblock {\em Topics in nonlinear analysis: the Herbert Amann anniversary
  volume}, page 183, 1999.

\bibitem{FGG08}
A.~Ferrero, F.~Gazzola, and H.C. Grunau.
\newblock {Decay and eventual local positivity for biharmonic parabolic
  equations}.
\newblock {\em Dyn. Syst.}, 21(4):1129--1157, 2008.

\bibitem{Fbook}
A.~Friedman.
\newblock {\em {Partial differential equations of parabolic type}}, volume~38.
\newblock 1964.

\bibitem{GH86}
M.~Gage and R.S. Hamilton.
\newblock {The heat equation shrinking convex plane curves}.
\newblock {\em J. Differential Geom.}, 23(1):69--96, 1986.

\bibitem{GG08}
F.~Gazzola and H.C. Grunau.
\newblock {Eventual local positivity for a biharmonic heat equation in $\R^n$}.
\newblock {\em Discrete Contin. Dyn. Syst., Ser. S}, 1:83--87, 2008.

\bibitem{GG09}
F.~Gazzola and H.C. Grunau.
\newblock {Some new properties of biharmonic heat kernels}.
\newblock {\em Nonlinear Anal.}, 70(8):2965--2973, 2009.

\bibitem{GI98pinching}
Y.~Giga and K.~Ito.
\newblock On pinching of curves moved by surface diffusion.
\newblock {\em Comm. Appl. Anal.}, 2(3):393--406, 1998.

\bibitem{GI99loss}
Y.~Giga and K.~Ito.
\newblock Loss of convexity of simple closed curves moved by surface diffusion.
\newblock In {\em {Topics in Nonlinear Analysis, The Herbert Amann anniversary
  volume (eds. J. Escher and G. Simonett)}}, volume~35 of {\em {Progress in
  Nonlinear Differential Equations and Their Applications}}, pages 305--320.
  Birkh{\"a}user, 1999.

\bibitem{H82nash}
R.S. Hamilton.
\newblock {The inverse function theorem of Nash and Moser}.
\newblock {\em J. Amer. Math. Soc.}, 7(1), 1982.

\bibitem{H82}
R.S. Hamilton.
\newblock {Three-manifolds with positive Ricci curvature}.
\newblock {\em J. Differential Geom.}, 17:255--306, 1982.

\bibitem{HP96}
G.~Huisken and A.~Polden.
\newblock {Geometric evolution equations for hypersurfaces. Calc. of Var. and
  Geom. Evo. Probl., CIME Lectures of Cetraro}, 1996.

\bibitem{KL11}
H.~Koch and T.~Lamm.
\newblock Geometric flows with rough initial data.
\newblock {\em Arxiv preprint arXiv:0902.1488}, 2009.
\newblock To appear in Asian J. Math.

\bibitem{KS01}
E.~Kuwert and R.~Sch{\"a}tzle.
\newblock {The Willmore flow with small initial energy}.
\newblock {\em J. Differential Geom.}, 57(3):409--441, 2001.

\bibitem{KS02}
E.~Kuwert and R.~Sch{\"a}tzle.
\newblock {Gradient flow for the Willmore functional}.
\newblock {\em Comm. Anal. Geom.}, 10(2):307--339, 2002.

\bibitem{KS04}
E.~Kuwert and R.~Sch{\"a}tzle.
\newblock {Removability of point singularities of Willmore surfaces}.
\newblock {\em Ann. of Math.}, 160(1):315--357, 2004.

\bibitem{LY82}
P.~Li and S.T. Yau.
\newblock A new conformal invariant and its applications to the {W}illmore
  conjecture and the first eigenvalue of compact surfaces.
\newblock {\em Invent. Math.}, 69(2):269--291, 1982.

\bibitem{Lbook}
A.~Lunardi.
\newblock {\em {Analytic semigroups and optimal regularity in parabolic
  problems}}.
\newblock 1995.

\bibitem{P96}
A.~Polden.
\newblock {\em {Curves and surfaces of least total curvature and fourth-order
  flows}}.
\newblock PhD thesis, 1996.

\bibitem{S04}
J.J. Sharples.
\newblock {Linear and quasilinear parabolic equations in Sobolev space}.
\newblock {\em J. Differential Equations}, 202(1):111--142, 2004.

\bibitem{S93}
L.~Simon.
\newblock Existence of surfaces minimizing the {W}illmore functional.
\newblock {\em Comm. Anal. Geom}, 1(2):281--326, 1993.

\bibitem{CT94}
J.E. Taylor and J.W. Cahn.
\newblock {Linking anisotropic sharp and diffuse surface motion laws via
  gradient flows}.
\newblock {\em J. Stat. Phys.}, 77(1):183--197, 1994.

\bibitem{W09}
G.E. Wheeler.
\newblock {\em {Fourth order geometric evolution equations}}.
\newblock PhD thesis, 2009.

\bibitem{W10}
G.E. Wheeler.
\newblock {Surface diffusion flow near spheres}.
\newblock {\em To appear in Calc. Var. Partial Differential Equations}, 2010.

\end{thebibliography}

\end{document}